\newtheorem{thm}{Theorem}[section]
\newtheorem{lem}[thm]{Lemma}
\newtheorem{exa}[thm]{Example}
\newtheorem{dfn}[thm]{Definition}
\newtheorem{rem}[thm]{Remark}
\numberwithin{equation}{section}
\begin{document}

\title[]{On the generalization of Conway algebra}

\author{Seongjeong Kim}

\address{Department of Fundamental Sciences, Bauman Moscow State Technical University, Moscow, Russia \\
ksj19891120@gmail.com}

\subjclass{57M25}%

%\date{\today}%
%\dedicatory{}%
%\commby{}%
% ----------------------------------------------------------------
\begin{abstract}
In \cite{PrzytyskiTraczyk} J.H.Przytyski and P.Traczyk introduced an algebraic structure, called {\it a Conway algebra,} and constructed an invariant of oriented links, which is a generalization of the Homflypt polynomial invariant. On the other hand, in \cite{KauffmanLambropoulou} L. H. Kauffman and S. Lambropoulou introduced new 4-variable invariants of oriented links, which are obtained by two computational steps: in the first step we apply a skein relation on every mixed crossing to produce unions of unlinked knots. In the second step, we apply the skein relation on crossings of the unions of unlinked knots, which introduces a new variable. In this paper, we will introduce a generalization of the Conway algebra $\widehat{A}$ with two binary operations and we construct an invariant valued in $\widehat{A}$ by applying those two binary operations to mixed crossings and self crossings respectively. Moreover, the generalized Conway algebra gives us an invariant of oriented links, which satisfies non-linear skein relations.
\end{abstract}

\maketitle

\section{Introduction}
The construction of the Jones polynomial by V. F. R. Jones in 1984 is considered as one of accomplishments in knot theory. At first it was constructed by the Artin braid groups and their Markov equivalence by means of a Markov trace on the Temperley-Lieb algebras. The diagrammatic method proposed by L. H. Kauffman, which is known as {\it Kauffman's bracket,} makes it easy to calculate the Jones polynomial. In 1987, J. Hoste, A. Ocneanu, K. Millett, P. J. Freyd, W. B. R. Lickorish, D. N. Yetter, J. H. Przytycki and P. Traczyk discovered the 2-variable polynomial invariant {\it the Homflypt polynomial}, which is a generalization of the Jones polynomial and the Alexander polynomial. The Homflypt polynomial can be constructed by using the Ocneanu trace defined on the Iwahori-Hecke algebra of type A and can be also defined diagrammatically by a skein relation. Especially J. H. Przytycki and P. Traczyk in~\cite{PrzytyskiTraczyk} defined the Homflypt polynomial by new algebraic structure, called {\it the Conway algebra.}
In 2016 M. Chlouveraki, J. Juyumaya, K. Karvounis, S. Lambropoulou and W. B. R. Lickorish ~\cite{ChlouverakiJuyumayaKarvounisLambropoulouLickorish} studied a family of new 2-variable polynomial invariants for oriented links defined by the Markov trace on the Yokonuma-Hecke algebra and showed that these invariants are topologically equivalent to the Homflypt polynomial for knots, but not for links. The authors generalized the family of 2-variable polynomials to a new 3-variable link invariants defined by a skein relation for oriented links, which is stronger than the Homflypt polynomial. In 2017 L. H. Kauffman and S. Lambropoulou~\cite{KauffmanLambropoulou} constructed new 4-variable polynomial invariants, which are generalized from the Homflypt polynomial, the Dubrovnik polynomial and the Kauffman polynomial. Roughly speaking,  they are obtained by two computational steps: in the first step we apply a skein relation on every mixed crossings to produce unions of unlinked knots. In the second step, we apply the skein relation on self crossings of the unions of unlinked knots, which introduces a new variable.

In this paper we introduce the Conway algebra with two binary operations and construct a generalized Conway type invariant valued in the generalized Conway algebra by applying one of those two operations to mixed crossings and self crossings respectively. We will show that the generalized Conway algebra gives us an invariant of oriented links, which satisfies `non-linear' skein relation.

The paper is organized as follows: in section 2 we introduce the Conway algebra and the Conway type invariant, which are introduced in~\cite{PrzytyskiTraczyk}. In section 3 we study a generalization of the Conway algebra and the Conway type invariant, which are called {\it a generalized Conway algebra of type 1 and a generalized Conway type invariant of type 1}, respectively. And we will introduce an example of generalized Conway type invariants of type 1 with non-linear skein relations.

\section{Conway algebra and Conway type invariant}
In this section, we recall the Conway algebra and the invariant of classical links valued in the Conway algebra~\cite{PrzytyskiTraczyk}.
\begin{dfn}\cite{PrzytyskiTraczyk}\label{def-Conwayal}
Let $\mathcal{A}$ be an algebra with two binary operations $\circ$ and $/$ on $\mathcal{A}$. Let $\{a_{n}\}_{n=1}^{\infty} \subset \mathcal{A}$. The quadruple $(\mathcal{A}, \circ, /,\{a_{n}\}_{n=1}^{\infty})$ is called {\it a Conway algebra} if it satisfies the following relations:
\begin{enumerate}
\item $(a \circ b) / b = a =(a/ b)  \circ b$ for $a,b \in \mathcal{A}$,
\item $ a_{n} = a_{n} \circ a_{n+1}$ for $n \in \mathbb{N}$,
\item $(a \circ b) \circ (c \circ d) = (a \circ c) \circ (b \circ d)$ for $a,b,c,d \in \mathcal{A}$.

\end{enumerate}

\end{dfn}

\begin{rem}
The original definition of the Conway algebra~\cite{PrzytyskiTraczyk} has three more relations $(a\circ b) / (c \circ d) =  (a / c) \circ (b / d)$, $(a/ b) / (c / d) =  (a/ c) / (b / d)$ and $a_{n} = a_{n} / a_{n+1}$. By some calculations, we can show that they are obtained from the relations in definition~\ref{def-Conwayal}.
\end{rem}
Let $\mathcal{L}$ be the set of equivalence classes of oriented link diagrams modulo Reidemeister moves. In \cite{PrzytyskiTraczyk}, J. H. Przytyski and P. Traczyk defined an invariant $W$, which we call {\it the Conway type invariant valued in the Conway algebra $(\mathcal{A}, \circ, /,\{a_{n}\}_{n=1}^{\infty})$} satisfying the following properties:
\begin{enumerate}
\item For the trivial link $T_{n}$ of $n$ components, 
$$W(T_{n}) = a_{n}.$$
\item For each crossing the following skein relation holds: $$W(L_{+}) = W(L_{-}) \circ W(L_{0}),$$ where $L_{+},L_{-},L_{0}$ are the oriented Conway triple, described in Fig.~\ref{1Conwaytriple}.
\end{enumerate}

\begin{figure}[h!]
\begin{center}
 \includegraphics[width = 8cm]{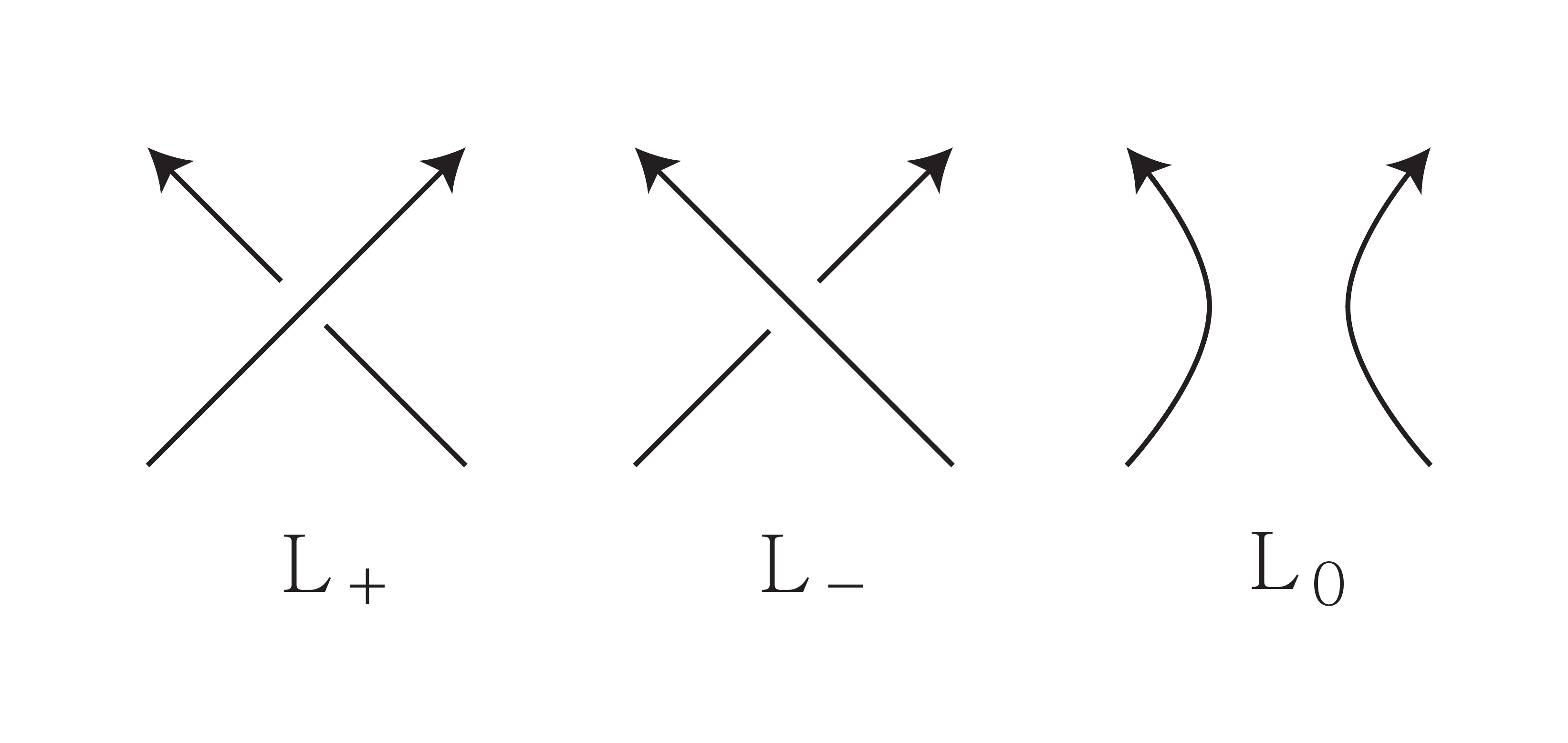}

\end{center}

 \caption{The oriented Conway triple $L_{+},L_{-}$ and $L_{0}$}\label{1Conwaytriple}
\end{figure}

 We recall the construction of the Conway type invariant, which is introduced in \cite{PrzytyskiTraczyk}: let $L = L_{1} \cup \cdots \cup L_{r}$ be an ordered oriented link diagram of $r$ components. Fix a base point $b_{i}$ on each component $L_{i}$. Suppose that we walk along the diagram $L_{1}$ according to the orientation from the base point $b_{1}$ to itself, then we walk along the diagram $L_{2}$ from the base point $b_{2}$ to itself and so on. If we pass a crossing $c$ first along the undercrossing(or overcrossing), we call $c$ {\it a bad crossing}(or {\it a good crossing}). We do crossing change for all bad crossings or splice bad crossings. To specify the crossing $c$ of $L$, we denote the diagram, in which the crossing $c$ has $sgn(c) = +1$($sgn(c) = -1$)  by $L^{c}_{+}$($L^{c}_{-}$). If a diagram is obtained from $L_{+}^{c}$ or $L_{-}^{c}$ by splicing the crossing $c$, we denote the diagram by $L_{0}^{c}$. Suppose that we meet the first bad crossing $c$ with $sgn(c) = +1$. Then by applying the skein relation we obtain
\begin{equation*}\label{selfConwayrel}
W(L^{c}_{+}) = W(L^{c}_{-}) \circ W(L^{c}_{0}).
\end{equation*}
Notice that if $c$ is a bad crossing of $L^{c}_{+}$ with respect to $b$, then the number of bad crossings of $L_{-}^{c}$ is less than the number of bad crossings of $L_{+}^{c}$ and the number of crossings of $L_{0}^{c}$ is less than the number of crossings of $L_{+}^{c}$. For the first bad crossing $c$ with $sgn(c) = -1$ with respect to $b$ we obtain 
\begin{equation*}
W(L^{c}_{-}) = W(L^{c}_{+}) / W(L^{c}_{0}),
\end{equation*}
and we repeat this process inductively on the diagrams, which are obtained from $L$ by a crossing change and a splicing, until we switch all bad crossings.
Note that if $L$ has no bad crossings, then the diagram $L$ is equivalent to the trivial link diagram, since $L$ is a descending diagram. For the trivial link $T_{n}$, $W(T_{n}) = a_{n}$.

\begin{thm}\cite{PrzytyskiTraczyk}
The Conway type invariant $W : \mathcal{L} \rightarrow \mathcal{A}$ is well defined. That is, the value of $W$ does not depend on the choice of base points and the order of links, and it is invariant under the Reidemeister moves.
\end{thm}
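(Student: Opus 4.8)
The plan is to set the invariant up on \emph{based} diagrams and then strip away the extra data. A based diagram is an oriented diagram $L$ with a chosen ordering $L_1,\dots,L_r$ of its components and a base point $b_i$ on each $L_i$, placed away from the crossings; running the descending procedure recalled above produces a value $W_{\mathbf b}(L)\in\mathcal A$. The first point to settle is that this procedure really defines a function, and the right tool is induction on the pair $\bigl(\mathrm{cr}(L),\mathrm{bad}_{\mathbf b}(L)\bigr)$ in the lexicographic order, where $\mathrm{cr}$ counts the crossings and $\mathrm{bad}_{\mathbf b}$ the bad crossings. When $\mathrm{bad}_{\mathbf b}(L)=0$ the diagram is descending, hence represents the trivial link $T_r$, and we \emph{set} $W_{\mathbf b}(L)=a_r$, consistently with the required normalization. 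When $\mathrm{bad}_{\mathbf b}(L)>0$, resolving the first bad crossing $c$ by the skein relation produces the diagram with $c$ switched --- same number of crossings, one fewer bad crossing --- together with $L^c_0$, which has one fewer crossing; both have strictly smaller complexity, so the recursion terminates and $W_{\mathbf b}(L)$ is well defined, once we also fix a convention for how the ordering and base points pass to $L^c_0$ (in particular, where to place the base point of the new component that appears when a self-crossing is smoothed).

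The technical heart of the proof is what one may call the \textbf{exchange lemma}: the skein relation $W_{\mathbf b}(L^c_+)=W_{\mathbf b}(L^c_-)\circ W_{\mathbf b}(L^c_0)$ holds at \emph{every} crossing $c$ of a based diagram, not only at the first bad one; equivalently, $W_{\mathbf b}$ may be computed by resolving the bad crossings in any order. I would prove this by induction on complexity. If $c$ is the first bad crossing, the statement is the definition of $W_{\mathbf b}$, and by axiom (1) it is symmetric under $L^c_+\leftrightarrow L^c_-$; the substantial case is a crossing $c$ whose switch would \emph{increase} the number of bad crossings, and there one expands $W_{\mathbf b}(L)$ through its actual first bad crossing $e$, applies the inductive hypothesis to the strictly simpler diagrams obtained from $L$ by switching and by smoothing $e$ --- in which the skein relation at $c$ is available --- and then rebrackets the resulting four-term expression. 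The rebracketing is precisely what axiom (3), $(a\circ b)\circ(c\circ d)=(a\circ c)\circ(b\circ d)$, together with its companions from the Remark, is there to supply (which identity is used depending on the signs of the two crossings being exchanged), and the recursion bottoms out, on descending diagrams, through another application of axiom (1). This is the step I expect to be the main obstacle: it requires keeping precise track of how the bad crossings are redistributed after each switch or smoothing, of the possibility that a smoothing destroys another crossing, and of the base-point and ordering conventions, and it is here --- and essentially only here --- that all three Conway-algebra axioms are genuinely needed.

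Granting the exchange lemma, the three assertions of the theorem follow by short inductions on complexity. For \textbf{independence of the base points} it suffices to move one base point $b_i$ across a single crossing of $L_i$; this alters $\mathrm{bad}_{\mathbf b}(L)$ by at most one crossing, namely a self-crossing of $L_i$, and resolving that crossing first in the two computations --- legitimate by the exchange lemma --- reduces the comparison to the inductive hypothesis on the switched and smoothed diagrams. For \textbf{independence of the ordering} it suffices to transpose two adjacent components $L_i,L_{i+1}$; if they share a crossing, resolving it first reduces to the inductive hypothesis, the smoothed diagram having the two components merged, and if they share no crossing the two computations coincide term by term. For \textbf{invariance under the Reidemeister moves} one first uses the two preceding items to bring the base points and the ordering into convenient position, and then argues by the standard skein-theoretic bookkeeping --- the exchange lemma serving to reorder the resolutions and to absorb the strands not taking part in the move --- so that the move itself is accounted for by the axioms applied at the descending diagrams appearing as the leaves of the computation: the first Reidemeister move reduces to axiom (2), $a_n=a_n\circ a_{n+1}$, the second to axiom (1), and the third to axiom (3). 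Combining the three independences shows that $W_{\mathbf b}(L)$ depends only on the class of $L$ in $\mathcal L$, which yields the well-defined invariant $W:\mathcal L\to\mathcal A$ asserted in the theorem.
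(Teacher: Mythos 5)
Your proposal matches, in structure and in substance, the argument this paper gives for its generalized Theorem~3.3 (the cited Przytycki--Traczyk theorem is the special case $* = \circ$, $// = /$): a double induction on the number of crossings and the number of bad crossings, with the ``skein relation holds at every crossing'' statement (your exchange lemma, the paper's step (b)) as the technical core where axiom (3) does its rebracketing work, followed by base-point independence, Reidemeister invariance (RM1 via $a_n = a_n\circ a_{n+1}$, RM2 via $(a/b)\circ b=a$, RM3 via resolving the relevant crossing so that the two smoothings agree), and order independence. The only genuine divergence is the last step: you transpose adjacent components and resolve a shared mixed crossing first, which is the standard direct argument, whereas the paper reduces the diagram to a crossingless trivial link by crossing changes, non-increasing Reidemeister moves, and its $RLT$ relocation operation (Lemma~3.9) and transfers order independence back along that reduction --- your route is shorter, the paper's avoids having to track how the induced ordering and base points behave on the merged component after smoothing.
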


\begin{exa}\label{exa_imp_Conway}\cite{PrzytyskiTraczyk}
Let $\mathcal{A} = \mathbb{Z} [p^{\pm 1 }, q^{\pm 1},z^{\pm 1}]$. Define binary operations $\circ$ and $/$ by
$$a \circ b = pa + qb + z~\text{and}~ a/b = p^{-1}a - p^{-1}qb - p^{-1}z.$$
Let $\{a_{n}\}_{n=1}^{\infty}$ be a sequence defined by the recurrence formula 
$$ a_{1} = 1~\text{and}~(1-p)a_{n} = qa_{n+1}+z,$$
or by the formula
$$ a_{n} =(1-\frac{z}{(1-p-q)})(\frac{1-p}{q})^{n-1} + \frac{z}{1-p-q}.$$
Then $(\mathbb{Z} [p^{\pm 1 }, q^{\pm 1},z^{\pm 1}], \circ, /, \{a_{n}\}_{n=1}^{\infty})$ is a Conway algebra.
\end{exa}

\begin{exa}\label{exa_imp_Conway-1}\cite{PrzytyskiTraczyk}
Let $\mathcal{A} = \mathbb{Z} [v^{\pm 1 }, z^{\pm 1}]$. Define the binary operations $\circ$ and $/$ by
$$a \circ b = v^{2}a + vzb, ~ a / b = v^{-2} a - v^{-1}zb.$$
Denote $a_{n} = (\frac{v^{-1} - v}{z})^{n-1}$ for each $n$. This is obtained from the Conway algebra in Example~\ref{exa_imp_Conway} by substituting $p=v^{2},q = vz,z=0$. Moreover, the Conway type invariant $W(L)$ valued in $( \mathbb{Z} [v^{\pm 1 }, z^{\pm 1}], \circ, /, \{a_{n}\}_{n=1}^{\infty})$ is the Homflypt polynomial.
\end{exa}

\section{Generalization of Conway algebra and Conway type invariant}
In this section we introduce a generalization of the Conway algebra. 
\begin{dfn}\label{def_genConaltype1}
Let $\widehat{\mathcal{A}}$ be a set with four binary operations  $\circ,*,/$ and $//$ on $\widehat{\mathcal{A}}$. Let $\{a_{n}\}_{n=1}^{\infty} \subset \widehat{\mathcal{A}}$. The hexuple $( \widehat{\mathcal{A}}, \circ,/,*, //,\{a_{n}\}_{n=1}^{\infty})$ is called {\it a generalized Conway algebra of type $1$} if it satisfies the following conditions:
\begin{itemize}
\item[(A)] $(a \circ b) / b = (a / b) \circ b = a = (a* b) // b  = (a // b) * b$ for $a,b \in  \widehat{\mathcal{A}}$,
\item[(B)] $ a_{n} = a_{n} \circ a_{n+1}$ for $n = 1,2,\cdots,$
\item[(C)] $(a \circ b) \circ (c \circ d) = (a \circ c) \circ (b \circ d)$ for $a,b,c,d \in  \widehat{\mathcal{A}}$,
\item[(D)] $(a * b) * (c * d) = (a * c) * (b * d)$ for $a,b,c,d \in  \widehat{\mathcal{A}}$,
\item[(E)] $(a \circ b) \circ (c * d) = (a \circ c) \circ (b * d)$ for $a,b,c,d \in  \widehat{\mathcal{A}}$,
\item[(F)] $(a * b) * (c \circ d) = (a * c) * (b \circ d)$ for $a,b,c,d \in  \widehat{\mathcal{A}}$,
\item[(G)] $(a \circ b) * (c \circ d) = (a * c) \circ (b * d)$ for $a,b,c,d \in  \widehat{\mathcal{A}}$.
\end{itemize}

\end{dfn}

\begin{rem}
Let $( \widehat{\mathcal{A}}, \circ,/,*,//,\{a_{n}\}_{n=1}^{\infty})$ be a generalized Conway algebra of type 1. The quadruple $(\widehat{\mathcal{A}}, \circ, /,\{a_{n}\}_{n=1}^{\infty})$ is a Conway algebra, and hence the Conway type invariant can be defined on $(\widehat{\mathcal{A}}, \circ, /,\{a_{n}\}_{n=1}^{\infty})$.
\end{rem}

\begin{thm}\label{Main_thm}
Let $\mathcal{L}$ be the set of equivalence classes of oriented link diagrams modulo Reidemeister moves. Let $(\widehat{\mathcal{A}}, \circ,/,*, //,\{a_{n}\}_{n=1}^{\infty})$ be a generalized Conway algebra of type $1$. Then there uniquely exists the invariant of classical oriented links $\widehat{W} : \mathcal{L} \rightarrow \widehat{\mathcal{A}}$ satisfying the following rules:
\begin{enumerate}
\item For self crossings $c$ the following relation holds: 
\begin{equation}\label{selfConwayrel}
\widehat{W}(L_{+}^{c}) = \widehat{W}(L_{-}^{c}) \circ \widehat{W}(L_{0}^{c}).
\end{equation}
\item For mixed crossings $c$ the following relation holds: 
\begin{equation}\label{mixedConwayrel}
\widehat{W}(L_{+}^{c}) = \widehat{W}(L_{-}^{c}) * \widehat{W}(L_{0}^{c}).
\end{equation}
\item Let $T_{n}$ be a trivial link of $n$ components. Then
\begin{equation}
\widehat{W}(T_{n}) = a_{n}.
\end{equation}
\end{enumerate}
We call $\widehat{W}$ {\it a generalized Conway type invariant of type $1$ valued in $(\widehat{\mathcal{A}}, \circ,/,*,//,\{a_{n}\}_{n=1}^{\infty})$.}
\end{thm}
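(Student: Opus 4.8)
The plan is to follow the strategy of Przytycki--Traczyk for the classical Conway type invariant, but to carry out the computation of $\widehat{W}$ in two nested descending phases: an \emph{outer} phase that resolves all mixed crossings using the operation $*$ (and its inverse $//$), and an \emph{inner} phase that, once the link has been separated into a union of unlinked knots, resolves the self crossings of each knot using $\circ$ (and its inverse $/$). Concretely, I would fix an ordering $L = L_1 \cup \cdots \cup L_r$ of the components and base points $b_i$, and first make the diagram \emph{component-descending}: walking along $L_1, L_2, \dots$ in order, every mixed crossing that is first encountered as an undercrossing is \textbf{bad}; using \eqref{mixedConwayrel} one writes $\widehat{W}(L^c_+)$ in terms of $\widehat{W}(L^c_-)$ and $\widehat{W}(L^c_0)$ (or, for a bad crossing with $\mathrm{sgn}(c)=-1$, uses $//$ and rule (A) to express $\widehat{W}(L^c_-)$ via $\widehat{W}(L^c_+)$ and $\widehat{W}(L^c_0)$). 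Since $L^c_0$ has strictly fewer crossings than $L^c_\pm$, and a crossing change at a bad mixed crossing strictly decreases the number of bad mixed crossings, this recursion terminates; when no bad mixed crossing remains, the diagram is a split union of knot diagrams, and one passes to the inner phase, which is exactly the classical Przytycki--Traczyk procedure applied to each knot summand with $\circ, /$, bottoming out at the trivial links with value $a_n$ (rule (B) guaranteeing consistency of the $a_n$ under the $\circ$-recursion). This simultaneously \emph{defines} $\widehat{W}$ and forces uniqueness, since every step is dictated by rules (1)--(3).

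The substance of the proof is \textbf{well-definedness}: the value must not depend on (i) the order in which we resolve crossings within a phase, (ii) the choice of base points, (iii) the ordering of the components, and (iv) it must be invariant under the three Reidemeister moves. For (i) and the Reidemeister moves I would mirror the original argument: the Reidemeister~I and II calculations and the base-point-independence reductions only ever involve crossings of a single "type" at a time locally, so the self-crossing relations follow verbatim from the classical proof using axioms (A) ($\circ,/$ part), (B), (C), while the mixed-crossing relations follow by the same formal manipulations using (A) ($*,//$ part) and (D). The genuinely new phenomenon is the \emph{interaction} between the two phases: when one resolves a bad mixed crossing via \eqref{mixedConwayrel}, the term $\widehat{W}(L^c_0)$ is a link with \emph{fewer} components (the two strands have merged), so its value is computed by a shorter run of the same two-phase algorithm, and one must check that reordering independent mixed-crossing resolutions, or interleaving them with self-crossing resolutions that become available, gives the same answer. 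This is precisely where axioms (E), (F), (G) enter: (G) is the "exchange law" that lets a $*$-resolution and a $\circ$-resolution performed at two different crossings be commuted, while (E) and (F) handle the mixed associativity patterns $(\,\circ\,)\circ(\,*\,)$ and $(\,*\,)*(\,\circ\,)$ that arise when a self-crossing resolution and a mixed-crossing resolution are applied to overlapping sets of strands.

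I would organize this as a single induction on the pair (number of crossings, number of bad crossings) ordered lexicographically, proving the statement: "$\widehat{W}(L)$ as produced by the algorithm is independent of all choices and Reidemeister-invariant." In the inductive step one takes two admissible sequences of resolutions, uses the axioms to transform one into the other one transposition at a time — each transposition of adjacent resolution steps is an instance of one of (C), (D), (E), (F), (G) (or the trivial commutation of resolutions at far-apart crossings), exactly as relation (3) is used in the Przytycki--Traczyk proof of their single-operation case. The main obstacle I anticipate is bookkeeping the "mixed" transpositions correctly: when a self crossing $c$ and a mixed crossing $c'$ are both bad, changing them in the two possible orders produces a $2\times 2$ array of diagrams whose four values must be matched up by the appropriate identity among (E), (F), (G); verifying that the six axioms (A)--(G) are exactly what is needed — no more, no less — for all such local reorderings (including the subtle case where resolving a mixed crossing changes which later crossings are "mixed" versus "self") is the delicate combinatorial core of the argument. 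Once all reorderings are shown to preserve the value, base-point and component-order independence follow as in the classical case (moving a base point past a crossing, or swapping two adjacent components, is realized by a controlled sequence of such reorderings), and Reidemeister invariance reduces to the finitely many local pictures already handled by the axioms of the underlying Conway algebra $(\widehat{\mathcal A},\circ,/,\{a_n\})$ together with the parallel $*$-computation.
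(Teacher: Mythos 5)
Your inductive skeleton --- a double induction on the number of crossings and, within that, on the number of bad crossings, resolving bad crossings by the skein relations and using the exchange identities to match up the $2\times 2$ array of diagrams $L^{cc'}_{\alpha\beta}$ obtained by resolving two crossings in either order --- is exactly the paper's argument, and you correctly isolate the one genuinely new phenomenon: splicing a crossing can change the self/mixed status of another crossing. That is precisely where axioms (E), (F), (G) are consumed (the paper uses (E) when $c,c'$ are both self crossings but $c$ becomes mixed in $L^{cc'}_{+0}$, (F) for the dual situation with two mixed crossings, and (G) when one crossing is self and the other mixed). Base-point independence, component-order independence (for which the paper proves a small auxiliary lemma reducing any diagram to a crossingless trivial one by crossing changes, non-increasing Reidemeister moves and relocations of split trivial components) and Reidemeister invariance are then handled by the same reductions as in Przytycki--Traczyk, as you indicate.

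The one thing to repair is the opening ``two nested descending phases'' framing, which cannot be taken literally and is inconsistent with the rest of your own proposal. First, after all bad mixed crossings have been switched the diagram is only component-descending, not a split union of knot diagrams: the good mixed crossings are still present in the picture, so ``the classical procedure applied to each knot summand'' is not yet meaningful without first invoking Reidemeister invariance, which is among the things being proved. Second, and more seriously, the phases do not stay separated: when the inner phase splices a bad self crossing of a component, that component splits into two, former self crossings become mixed crossings, and some of these may be bad, so new outer-phase work is created in the middle of the inner phase --- this is the very status-change phenomenon you flag later as delicate. The paper sidesteps all of this by running a single pass that resolves the first bad crossing encountered, whatever its type, and letting the double induction absorb the bookkeeping; your lexicographic induction with transpositions already is that argument, so the fix is simply to discard the two-phase description and let the single recursion stand.
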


\textbf{Construction of $\widehat{W}$.} First we will define $\widehat{W}$ for ordered oriented link diagrams. Let $L = L_{1} \cup \cdots \cup L_{r}$ be an ordered oriented link diagram of $r$ components. Fix a base point $b_{i}$ on each component $L_{i}$. Suppose that we walk along the diagram $L_{1}$ according to the orientation from the base point $b_{1}$ to itself, then we walk along the diagram $L_{2}$ from the base point $b_{2}$ to itself and so on. If we pass a crossing $c$ first along the undercrossing(or overcrossing), we call $c$ {\it a bad crossing}(or {\it a good crossing}) with respect to the base points $b = \{b_{1}, \cdots, b_{r}\}$. Now we perform the crossing change or splicing for all bad crossings. Denote the value of  $\widehat{W}$ for $L$ corresponding to the base points $b$ by $\widehat{W}_{b}(L)$. Suppose that we meet the first bad crossing $c$. If it is a self crossing, we apply the skein relation on $c$ with the following property:
\begin{equation*}
\widehat{W}_{b}(L_{+}^{c}) = \widehat{W}_{b}(L_{-}^{c}) \circ \widehat{W}_{\tilde{b}}(L_{0}^{c}),
\end{equation*}
where $\tilde{b} = \{b_{1}, \cdots, b_{j-1},b_{j},b'_{j},b_{j+1}, \cdots, b_{n},b_{n+1}\}$ and $b'_{j}$ is a chosen base point near the place of the crossing $c$ on the component, which is appeared by splicing the self crossing $c$ of $L_{j}$, see~Fig.~\ref{2splicing}.
\begin{figure}[h!]
\begin{center}
 \includegraphics[width = 8cm]{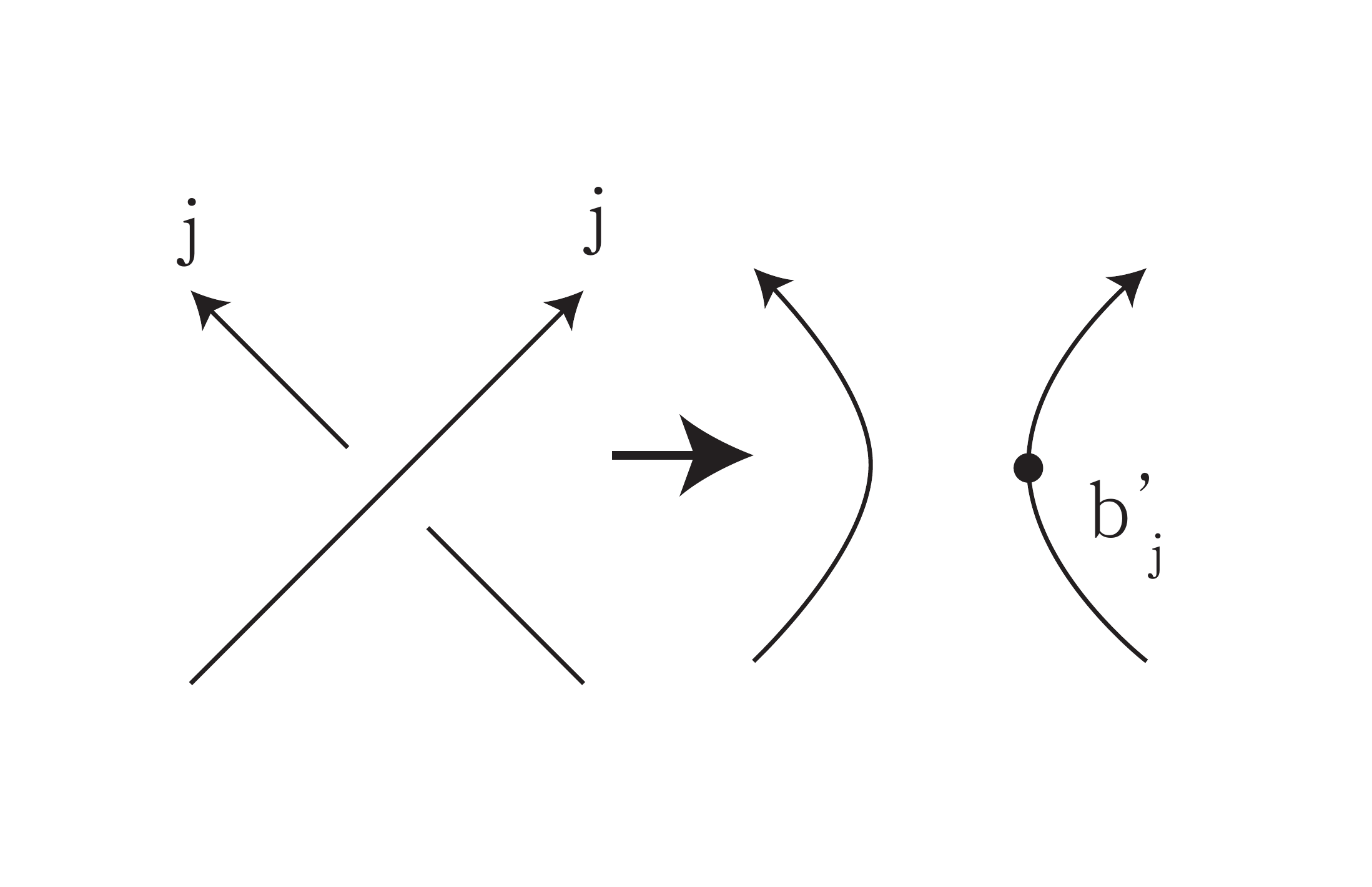}

\end{center}

 \caption{The choice of a base point $b'_{j}$}\label{2splicing}
\end{figure}

If it is a mixed crossing between two components $L_{i}$ and $L_{j}$, we apply the skein relation on $c$ with the following property:
\begin{equation*}
\widehat{W}_{b}(L_{+}^{c}) = \widehat{W}_{b}(L_{-}^{c}) *\widehat{W}_{\tilde{b}}(L_{0}^{c}),
\end{equation*}
where $\tilde{b} = \{b_{1}, \cdots, b_{j-1},b_{j+1}, \cdots, b_{n}\}$.
As an abuse of notation we write $\tilde{b} = b$, if it does not cause confusion. Notice that if $c$ is a positive crossing, then the number of bad crossings of $L_{-}^{c}$ is less than the number of bad crossings of $L_{+}^{c}$ and the number of crossings of $L_{0}^{c}$ is less than the number of crossings of $L_{+}^{c}$. We apply those relations to the first bad crossings of $L_{-}^{c}$ and $L_{0}^{c}$ inductively until we switch all bad crossings. If $c$ is a negative crossing, then we apply those relations to the first bad crossings of $L_{+}^{c}$ and $L_{0}^{c}$ inductively until we switch all bad crossings.
 If $L = L_{1} \cup \cdots \cup L_{r}$ has no bad crossings, then we define $\widehat{W}_{b}(L) = a_{n}$.

\begin{proof}
Let $\mathcal{L}_{k}$ be the set of all ordered oriented link diagrams such that diagrams in $\mathcal{L}_{k}$ have no more than $k$ crossings. We will show that $\widehat{W}_{b}(L)$ is an invariant by the following steps: for every $k = 0,1,\cdots$ and, for diagrams $L  \in \mathcal{L}_{k}$ with base points $b = \{b_{1}, \cdots, b_{r}\}$,
\begin{itemize}
\item[(a)] the mapping $\widehat{W}_{b}(L)$ is well defined,
\item[(b)] the relation (\ref{selfConwayrel}) and  (\ref{mixedConwayrel}) hold for every crossing $c$,
\item[(c)] the value of $\widehat{W}_{b}(L)$ does not depend on the choice of base points,
\item[(d)] the value of $\widehat{W}_{b}(L)$ is invariant under Reidemeister moves, which do not make the number of crossings more than $k$,
\item[(e)] the value of $\widehat{W}_{b}(L)$ does not depend on the order of components,
\end{itemize}
by the induction on $k$.
For $k=0$, it is clear that $\widehat{W}_{b}$ satisfies (a), (b), (c), (d) and (e).
Suppose that (a),(b),(c),(d) and (e) are true on $\mathcal{L}_{k}$ for $k\geq 0$. Let $L = L_{1} \cup \cdots \cup L_{r}$ be an ordered oriented link diagram in $\mathcal{L}_{k+1}$. Now let us fix base points $b = \{b_{1}, \cdots, b_{r}\}$ on $L$. \\

\textbf{(a) The mapping $\widehat{W}_{b}(L)$  is well defined.}
 If every crossing of $L$ is good with respect to $b$, it is clear. Now we use the second induction on the number of bad crossings with respect to $b$. Assume that $\widehat{W}_{b}(L)$ is well defined on diagrams $L$ in $\mathcal{L}_{k+1}$, which have $m$ bad crossings with respect to $b$. Let $L$ be a diagram with $k+1$ crossings, of which there are $m+1$ bad crossings with respect to fixed base points $b$. Let $c$ be the first bad crossing of $L$ with respect to $b$. If $c$ is a positive self crossing, by applying the relation (\ref{selfConwayrel}) to $c$, we obtain the equation 
$$\widehat{W}_{b}(L^{c}_{+}) =  \widehat{W}_{b}(L^{c}_{-}) \circ \widehat{W}_{\tilde{b}}(L^{c}_{0}).$$
Since $L^{c}_{-}$ has $m$ bad crossings with respect to $b$ and $L^{c}_{0}$ has $k$ crossings, that is, $L^{c}_{0} \in \mathcal{L}_{k}$, $\widehat{W}_{b}(L^{c}_{-})$ and $\widehat{W}_{\tilde{b}}(L^{c}_{0})$ are well-defined by the assumptions of the second and the first inductions respectively, and hence $\widehat{W}_{b}(L)$ is well defined.
If $c$ is a positive mixed crossing, by applying the relation (\ref{mixedConwayrel}) to $c$, we obtain the equation 
$$\widehat{W}_{b}(L^{c}_{+}) =  \widehat{W}_{b}(L^{c}_{-}) * \widehat{W}_{\tilde{b}}(L^{c}_{0}).$$
Analogously we can show that  $\widehat{W}_{b}(L)$ is well defined. Similarly we can show that $\widehat{W}_{b}(L)$ is well defined when the first bad crossing is negative and hence $\widehat{W}_{b}(L)$ is well defined for diagrams $L$ with $k+1$ crossings, of which there are $m+1$ bad crossings. Therefore the mapping $\widehat{W}_{b}$ is well defined on $\mathcal{L}_{k+1}$.

\textbf{(b) The relation (\ref{selfConwayrel}) and (\ref{mixedConwayrel}) hold for every crossing $c$.} To prove the statement (b) we will use the second induction on the number of bad crossings with respect to $b$. Suppose that there are no bad crossings of $L$ with respect to $b$. If $c$ is a positive self crossing of $L^{c}_{+}$, then $L^{c}_{-}$ has only one bad crossing. Since $c$ is the only one bad self crossing, $c$ is the first bad crossing with respect to $b$. By the construction of $\widehat{W}_{b}$ the equation holds
\begin{equation*}
\widehat{W}_{b}(L^{c}_{-}) = \widehat{W}_{b}(L^{c}_{+}) / \widehat{W}_{b}(L^{c}_{0}),
\end{equation*} 
and hence 
\begin{equation*}
\widehat{W}_{b}(L^{c}_{+}) = \widehat{W}_{b}(L^{c}_{-}) \circ \widehat{W}_{b}(L^{c}_{0}).
\end{equation*} 
Similarly we can show that the equation (\ref{selfConwayrel})
holds for negative self crossings $c$. Analogously we can show that the equation (\ref{mixedConwayrel})
\begin{equation*}
\widehat{W}_{b}(L^{c}_{+}) = \widehat{W}_{b}(L^{c}_{-})* \widehat{W}_{b}(L^{c}_{0})
\end{equation*}
holds for mixed crossings $c$.
Suppose that the statement (b) holds for ordered oriented link diagrams $L$ in $\mathcal{L}_{k+1}$ with $m \ge 0$ bad crossings with respect to fixed base points $b$. Let $L$ be an ordered oriented link diagram in $\mathcal{L}_{k+1}$ with $m+1$ bad crossings with respect to fixed base points $b$. For crossings $c$ and $c'$ of $L$, let us denote the diagram with states $\alpha$ and $\beta$ of crossings $c$ and $c'$ respectively, by $L = L_{\alpha \beta}^{cc'}$. To prove the statement (b), we need to show that for the first bad crossing $c'$ with respect to $b$ and for any crossing $c$ of $L = L^{cc'}_{\alpha\beta}$ with states $\alpha$, $\beta$ $\in \{ +,-\}$ of $c$ and $c'$ respectively, the following equation holds:
\begin{equation}\label{lem_dia}
 \widehat{W}_{b}(L^{cc'}_{-\alpha\beta}) *^{\alpha} \widehat{W}_{b}(L^{cc'}_{0\beta}) = \widehat{W}_{b}(L^{cc'}_{\alpha,-\beta}) *^{\beta} \widehat{W}_{b}(L^{cc'}_{\alpha0}),
\end{equation}
where $*^{+} =  \circ ~\text{or}~ *$ and $*^{-} = / ~\text{or}~ //$. 
Let $c$ be a crossing of $L$. Without loss of generality, we may assume that $c$ is a positive crossing of $L$ and the other cases can be proved analogously. If it is the first bad crossing of $L$, it is clear. If $c$ is a good crossing of $L_{+}^{c}$ but it is the first bad crossing of $L_{-}^{c}$, then $\widehat{W}_{b}(L_{-}^{c}) = \widehat{W}_{b}(L_{+}^{c}) /\widehat{W}_{b}(L_{0}^{c})$ (or $\widehat{W}_{b}(L_{-}^{c}) = \widehat{W}_{b}(L_{+}^{c}) //\widehat{W}_{b}(L_{0}^{c})$), that is, $\widehat{W}_{b}(L_{+}^{c}) = \widehat{W}_{b}(L_{-}^{c}) \circ \widehat{W}_{b}(L_{0}^{c})$ (or $\widehat{W}_{b}(L_{+}^{c}) = \widehat{W}_{b}(L_{-}^{c}) * \widehat{W}_{b}(L_{0}^{c})$). Suppose that $c$ is not the first crossing with respect to $b$ of $L_{+}^{c}$ and $L_{-}^{c}$. Let $c'$ be the first bad crossing of $L$. Without loss of generality, we may assume that $c'$ is a positive crossing. \\
\textbf{Case1} $c$ and $c'$ are self crossings.\\
Notice that the crossing $c$ can be changed to a mixed crossing by splicing $c'$, i.e. $c$ can be a mixed crossing of the diagram $L^{cc'}_{+0}$ even if it is a self crossing of $L^{cc'}_{++}$. 
If $c$ is a self crossing of $L^{cc'}_{+0}$, we obtain
\begin{eqnarray*}
\widehat{W}_{b}(L^{cc'}_{++}) &=& \widehat{W}_{b}(L^{cc'}_{+-}) \circ \widehat{W}_{b}(L^{cc'}_{+0})\\
 &=& (\widehat{W}_{b}(L^{cc'}_{--}) \circ \widehat{W}_{b}(L^{cc'}_{0-})) \circ (\widehat{W}_{b}(L^{cc'}_{-0}) \circ \widehat{W}_{b}(L^{cc'}_{00})) \\
 &=& (\widehat{W}_{b}(L^{cc'}_{--}) \circ \widehat{W}_{b}(L^{cc'}_{-0})) \circ (\widehat{W}_{b}(L^{cc'}_{0-}) \circ \widehat{W}_{b}(L^{cc'}_{00})) \\
 &=&\widehat{W}_{b}(L^{cc'}_{-+}) \circ \widehat{W}_{b}(L^{cc'}_{0+}),
\end{eqnarray*}
by applying the relation (\ref{selfConwayrel}). The second equation holds since $L^{cc'}_{+-}$ has $m$ bad crossings and $L^{cc'}_{+0}$ belongs to $\mathcal{L}_{k}$. The third equation holds because of the relation (C) of definition~\ref{def_genConaltype1}. The fourth equation holds, because $c'$ is the first bad crossing of $L^{cc'}_{-+}$ with respect to $b$.
If $c$ is a mixed crossing of $L^{cc'}_{+0}$, then $c'$ is also a mixed crossing when the crossing $c$ is spliced. Analogously we obtain
\begin{eqnarray*}
\widehat{W}_{b}(L^{cc'}_{++}) &=& \widehat{W}_{b}(L^{cc'}_{+-}) \circ \widehat{W}_{b}(L^{cc'}_{+0})\\
 &=& (\widehat{W}_{b}(L^{cc'}_{--}) \circ \widehat{W}_{b}(L^{cc'}_{0-})) \circ (\widehat{W}_{b}(L^{cc'}_{-0}) * \widehat{W}_{b}(L^{cc'}_{00})) \\
 &=& (\widehat{W}_{b}(L^{cc'}_{--}) \circ \widehat{W}_{b}(L^{cc'}_{-0})) \circ (\widehat{W}_{b}(L^{cc'}_{0-}) * \widehat{W}_{b}(L^{cc'}_{00})) \\
 &=&\widehat{W}_{b}(L^{cc'}_{-+}) \circ \widehat{W}_{b}(L^{cc'}_{0+}),
\end{eqnarray*}
by applying the relations (\ref{selfConwayrel}) and (\ref{mixedConwayrel}), and the relation (E) of definition~\ref{def_genConaltype1}.\\
\textbf{Case2} $c$ and $c'$ are mixed crossings.\\
Notice that the crossing $c$ can be changed to a self crossing by splicing $c'$, i.e. $c$ can be a self crossing of the diagram $L^{cc'}_{+0}$ even if it is a mixed crossing of $L^{cc'}_{++}$. 
If $c$ is a mixed crossing of $L^{cc'}_{+0}$, we obtain
\begin{eqnarray*}
\widehat{W}_{b}(L^{cc'}_{++}) &=& \widehat{W}_{b}(L^{cc'}_{+-}) * \widehat{W}_{b}(L^{cc'}_{+0})\\
 &=& (\widehat{W}_{b}(L^{cc'}_{--}) * \widehat{W}_{b}(L^{cc'}_{0-})) * (\widehat{W}_{b}(L^{cc'}_{-0}) *\widehat{W}_{b}(L^{cc'}_{00})) \\
 &=& (\widehat{W}_{b}(L^{cc'}_{--}) * \widehat{W}_{b}(L^{cc'}_{-0})) * (\widehat{W}_{b}(L^{cc'}_{0-}) *\widehat{W}_{b}(L^{cc'}_{00})) \\
 &=&\widehat{W}_{b}(L^{cc'}_{-+}) * \widehat{W}_{b}(L^{cc'}_{0+}),
\end{eqnarray*}
by applying the relation (\ref{mixedConwayrel}) and the relation (D) of definition~\ref{def_genConaltype1}.
If $c$ is a self crossing of $L^{cc'}_{+0}$, we obtain
\begin{eqnarray*}
\widehat{W}_{b}(L^{cc'}_{++}) &=& \widehat{W}_{b}(L^{cc'}_{+-}) * \widehat{W}_{b}(L^{cc'}_{+0})\\
 &=& (\widehat{W}_{b}(L^{cc'}_{--}) * \widehat{W}_{b}(L^{cc'}_{0-})) * (\widehat{W}_{b}(L^{cc'}_{-0}) \circ\widehat{W}_{b}(L^{cc'}_{00})) \\
 &=& (\widehat{W}_{b}(L^{cc'}_{--}) * \widehat{W}_{b}(L^{cc'}_{-0})) * (\widehat{W}_{b}(L^{cc'}_{0-}) \circ\widehat{W}_{b}(L^{cc'}_{00})) \\
 &=&\widehat{W}_{b}(L^{cc'}_{-+}) * \widehat{W}_{b}(L^{cc'}_{0+}),
\end{eqnarray*}
by applying the relations (\ref{selfConwayrel}) and (\ref{mixedConwayrel}), and the relation (F) of definition~\ref{def_genConaltype1}.

\textbf{Case3} $c$ is a mixed crossing and $c'$ is a self crossing.\\
We obtain
\begin{eqnarray*}
\widehat{W}_{b}(L^{cc'}_{++}) &=& \widehat{W}_{b}(L^{cc'}_{+-}) \circ \widehat{W}_{b}(L^{cc'}_{+0})\\
 &=& (\widehat{W}_{b}(L^{cc'}_{--}) * \widehat{W}_{b}(L^{cc'}_{0-})) \circ (\widehat{W}_{b}(L^{cc'}_{-0}) *\widehat{W}_{b}(L^{cc'}_{00})) \\
 &=& (\widehat{W}_{b}(L^{cc'}_{--}) \circ \widehat{W}_{b}(L^{cc'}_{-0})) * (\widehat{W}_{b}(L^{cc'}_{0-}) \circ\widehat{W}_{b}(L^{cc'}_{00})) \\
 &=&\widehat{W}_{b}(L^{cc'}_{-+}) * \widehat{W}_{b}(L^{cc'}_{0+})
\end{eqnarray*}
by applying the relations (\ref{selfConwayrel}) and (\ref{mixedConwayrel}), and the third equation holds because of the relation (G) of definition~\ref{def_genConaltype1}.\\
\textbf{Case4} $c$ is a self crossing and $c'$ is a mixed crossing.\\
This case is similar to the case 3. By the induction on the number of bad crossings with respect to $b$ the statement (b) is true for $L$ in $\mathcal{L}_{k+1}$.

\textbf{(c) The value of $\widehat{W}_{b}(L)$ does not depend on the choice of the base points.}
 Assume that $\widehat{W}_{b}(L)$ does not depend on the base points for diagrams in $\mathcal{L}_{k}$. Let $L$ be a diagram in $\mathcal{L}_{k+1}$ of $r$ components with base points $b$. To show the independence on the base points, it is sufficient to consider that one of base points is replaced by another point. Assume that $b = (b_{1}, \cdots, b_{l}, \cdots, b_{r})$ is changed to $b' = (b_{1}, \cdots, b'_{l}, \cdots, b_{r})$ satisfying that $b'_{l}$ is obtained from $b_{l}$ by passing only one crossing $c$ as described in Fig.~\ref{2replace_basept}. Without loss of generality, we may assume that the crossing $c$ is positive. 	
 \begin{figure}[h!]
\begin{center}
 \includegraphics[width = 8cm]{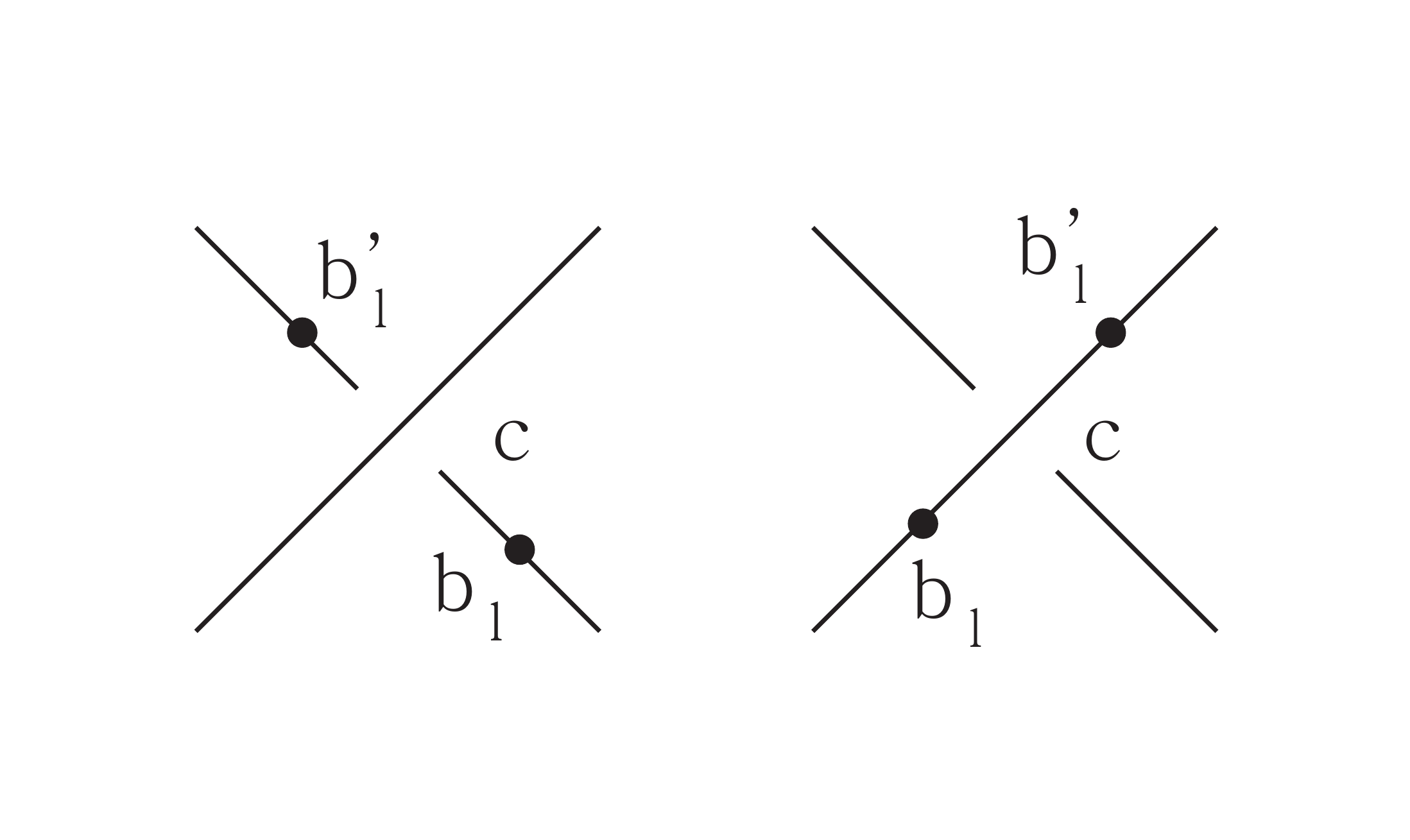}

\end{center}

 \caption{Replace of base points}\label{3replace_basept}
\end{figure}
Suppose that $L$ has no bad crossings with respect to $b$. If $c$ is a good crossing with respect to both of $b$ and $b'$, it is clear that  $\widehat{W}_{b}(L) = \widehat{W}_{b'}(L)$ because $L$ has no bad crossings with respect to both of $b$ and $b'$. Suppose that $c$ is a good crossing with respect to $b$, but $c$ is a bad crossing with respect to $b'$. Notice that $L$ has no bad crossings with respect to $b$, but $L$ has only one bad crossing with respect to $b'$. Moreover this case is possible only when $c$ is a self crossing. Then we obtain that
\begin{equation*}
\widehat{W}_{b}(L) = a_{r},
\end{equation*}
and
\begin{equation*}
\widehat{W}_{b'}(L^{c}_{+}) = \widehat{W}_{b'}(L^{c}_{-}) \circ \widehat{W}_{b'}(L^{c}_{0}).
\end{equation*}
Since the diagram $L^{c}_{0}$ belongs to $\mathcal{L}_{k}$ and the diagram $L^{c}_{+}$ has the form of a descending diagram, by the assumption of the induction, we can choose other base points $\widetilde{b'}$ on $L^{c}_{0}$ for the diagram $L^{c}_{0}$ to have no bad crossings with respect to $\widetilde{b'}$. Since $L^{c}_{-}$ has no bad crossings with respect to $b'$, we obtain that 
\begin{equation*}
\widehat{W}_{b'}(L^{c}_{+}) = \widehat{W}_{b'}(L^{c}_{-}) \circ \widehat{W}_{b'}(L^{c}_{0}) =  \widehat{W}_{b'}(L^{c}_{-}) \circ \widehat{W}_{\widetilde{b'}}(L^{c}_{0}) = a_{r} \circ a_{r+1}.
\end{equation*}
By relation (B) of definition~\ref{def_genConaltype1} the equation $\widehat{W}_{b}(L) = \widehat{W}_{b'}(L)$ holds.

Assume that $\widehat{W}_{b}(L) =\widehat{W}_{b'}(L)$ for diagrams in $\mathcal{L}_{k+1}$ with $m \ge 0$ bad crossings with respect to base points $b$. Suppose that $L$ has $m+1$ bad crossings with respect to $b$. \\
\textbf{Case1.} $c$ is a mixed crossing.\\
If $c$ is a bad mixed crossing with respect to $b$, then $c$ is also bad with respect to $b'$ and we obtain 
$$\widehat{W}_{b}(L^{c}_{+}) = \widehat{W}_{b}(L^{c}_{-}) * \widehat{W}_{b}(L^{c}_{0}) ~\text{and}~ \widehat{W}_{b'}(L^{c}_{+}) = \widehat{W}_{b'}(L^{c}_{-}) *\widehat{W}_{b'}(L^{c}_{0}).$$ Since $L^{c}_{-}$ has $m$ bad crossings with respect to $b$ and  $L^{c}_{0}$ belongs to $\mathcal{L}_{k}$, by the assumption for the second induction the equation $\widehat{W}_{b}(L^{c}_{-}) = \widehat{W}_{b'}(L^{c}_{-})$ holds, and by the assumption for the induction $\widehat{W}_{b}(L^{c}_{0})=\widehat{W}_{b'}(L^{c}_{0})$ holds. It follows that the equation $\widehat{W}_{b}(L) = \widehat{W}_{b'}(L)$ holds. If $c$ is a good crossing with respect to both of $b$ and $b'$, then there is a bad crossing $c'$ with respect to $b$ and $b'$, since $L$ has $m+1$ bad crossings with respect to $b$. Then analogously we can show that $\widehat{W}_{b}(L^{c}_{+}) =\widehat{W}_{b'}(L^{c}_{+})$.\\
\textbf{Case2.} $c$ is a self crossing.\\
If $c$ is a bad self crossing or a good self crossing with respect to both of base points $b$ and $b'$, we can show that the equation $\widehat{W}_{b}(L) = \widehat{W}_{b'}(L)$ holds as the previous case. Suppose that $c$ is a good self crossing with respect to $b$, but $c$ is a bad self crossing with respect to $b'$. Since $L$ has $m+1$ bad crossings with respect to $b$, there is another bad crossing $c'$ of $L$ with respect to $b$ and $b'$. Without loss of generality we may assume that $c'$ is a positive crossing. If $c'$ is a positive bad self crossing of $L$,
by applying the relations~(\ref{selfConwayrel}) and (\ref{mixedConwayrel}) to $c$ and $c'$ we obtain that 
\begin{eqnarray*}
\widehat{W}_{b'}(L^{cc'}_{++}) &=& \widehat{W}_{b'}(L^{cc'}_{+-}) \circ \widehat{W}_{b'}(L^{cc'}_{+0})\\
&=&( \widehat{W}_{b'}(L^{cc'}_{--})  \circ  \widehat{W}_{b'}(L^{cc'}_{0-}) ) \circ (\widehat{W}_{b'}(L^{cc'}_{-0}) \alpha \widehat{W}_{b'}(L^{cc'}_{00})),
\end{eqnarray*}
where $\alpha = \circ$ if $c$ is a self crossing of $L^{cc'}_{+0}$, or $\alpha = *$, otherwise. Since $L^{cc'}_{--}$ has $m$ bad crossings with respect to $b'$ and $L^{cc'}_{0-}$, $L^{cc'}_{-0}$ and $L^{cc'}_{00}$ have less than $k+1$ crossings,  $\widehat{W}_{b'}(L^{cc'}_{--}) =  \widehat{W}_{b}(L^{cc'}_{--})$, $\widehat{W}_{b'}(L^{cc'}_{-0}) =\widehat{W}_{b}(L^{cc'}_{-0})$, $\widehat{W}_{b'}(L^{cc'}_{0-}) = \widehat{W}_{b}(L^{cc'}_{0-})$ and $\widehat{W}_{b'}(L^{cc'}_{00}) = \widehat{W}_{b}(L^{cc'}_{00})$. Then we obtain that 
\begin{eqnarray*}
\widehat{W}_{b'}(L^{cc'}_{++}) &=&( \widehat{W}_{b'}(L^{cc'}_{--})  \circ  \widehat{W}_{b'}(L^{cc'}_{0-}) ) \circ ( \widehat{W}_{b'}(L^{cc'}_{-0}) \alpha \widehat{W}_{b'}(L^{cc'}_{00})) \\
&=&( \widehat{W}_{b}(L^{cc'}_{--})  \circ  \widehat{W}_{b}(L^{cc'}_{0-}) ) \circ ( \widehat{W}_{b}(L^{cc'}_{-0}) \alpha \widehat{W}_{b}(L^{cc'}_{00})) \\
&=& \widehat{W}_{b}(L^{cc'}_{+-}) \circ \widehat{W}_{b}(L^{cc'}_{+0}) \\
&=& \widehat{W}_{b}(L^{cc'}_{++}).
\end{eqnarray*}
If $c'$ is a positive bad mixed crossing, analogously we obtain that 
\begin{eqnarray*}
\widehat{W}_{b'}(L^{cc'}_{++}) &=& \widehat{W}_{b'}(L^{cc'}_{+-}) * \widehat{W}_{b'}(L^{cc'}_{+0})\\
&=&(\widehat{W}_{b'}(L^{cc'}_{--})  \circ  \widehat{W}_{b'}(L^{cc'}_{0-}) ) * (\widehat{W}_{b'}(L^{cc'}_{-0}) \circ \widehat{W}_{b'}(L^{cc'}_{00})) \\
&=&( \widehat{W}_{b}(L^{cc'}_{--})  \circ  \widehat{W}_{b}(L^{cc'}_{0-}) ) * ( \widehat{W}_{b}(L^{cc'}_{-0}) \circ \widehat{W}_{b}(L^{cc'}_{00})) \\
&=& \widehat{W}_{b}(L^{cc'}_{+-}) * \widehat{W}_{b}(L^{cc'}_{+0}) \\
&=& \widehat{W}_{b}(L^{cc'}_{++}).
\end{eqnarray*}
By the second induction on the number of bad crossings $\widehat{W}_{b}$ does not depend on the choice of base points for diagrams $L$ in $\mathcal{L}_{k+1}$. Since $\widehat{W}_{b}$ is independent to the choice of base points, now we consider a function $\widehat{W}_{0}$, which associates an element of $\widehat{\mathcal{A}}$ to any diagram $L$ in $\mathcal{L}_{k+1}$ with a fixed order of components.

\textbf{(d) The value of $\widehat{W}_{0}(L)$ is invariant under Reidemeister moves, which do not make the number of crossings more than $k+1$.}
Let $L$ be an ordered oriented link diagram of $r$ components. Let $R$ be one of Reidemeister moves on diagrams in $\mathcal{L}_{k+1}$. The diagram $R(L)$, which is obtained from $L$ by applying $R$, has the natural order of components. Let us denote the number of crossings of a diagram $L$ by $cr(L)$. Assume that $cr(R(L)) \leq cr(L) \leq k+1$. Since $\widehat{W}_{0}(L)$ does not depend on the base points, we may assume that base points $b$ are chosen outside of the part of the diagram involved in the Reidemeister move $R$ and are compatible with the given order of components. We will show that $\widehat{W}_{0}(L) = \widehat{W}_{0}(R(L))$. We use the second induction on the number of bad crossings $b(L)$ for base points $b$. Assume that $b(L) =0$. It is clear that $b(R(L)) =0$ and the number of components remains. Therefore $\widehat{W}_{0}(L)= a_{r} = \widehat{W}_{0}(R(L))$. Now we assume that $\widehat{W}_{0}(L) = \widehat{W}_{0}(R(L))$ for diagrams $L$ with $b(L) \leq m$. Suppose that $b(L) =m+1$. 
Assume that there is a bad pure crossing which is not involved in the considered Reidemeister moves. We may assume that the bad crossing $c$ is positive. By the assumption for the second induction, we obtain that 
\begin{equation*}
\widehat{W}_{0}(L_{-}^{c}) = \widehat{W}_{0}(R(L_{-}^{c})).
\end{equation*}
Since $cr(L^{c}_{+})>cr(L^{c}_{0})$, by the assumption for induction, we obtain that
\begin{equation*}
\widehat{W}_{0}(L_{0}^{c}) = \widehat{W}_{0}(R(L_{0}^{c})).
\end{equation*}
By the relation~(\ref{selfConwayrel}), we obtain
\begin{equation*}
\widehat{W}_{0}(L_{+}^{c}) = \widehat{W}_{0}(L_{-}^{c}) \circ \widehat{W}_{0}(L_{0}^{c})
\end{equation*}
and 
\begin{equation*}
\widehat{W}_{0}(R(L)_{+}^{c}) = \widehat{W}_{0}(R(L)_{-}^{c}) \circ \widehat{W}_{0}(R(L)_{0}^{c}).\end{equation*}
It is clear that $R(L^{c}_{-}) = R(L)^{c}_{-}$ and  $R(L^{c}_{0}) = R(L)^{c}_{0}$ and hence we obtain 

\begin{equation*}
\widehat{W}_{0}(L_{+}^{c}) = \widehat{W}_{0}(R(L_{+}^{c})).
\end{equation*}
If there is a bad mixed crossing $c$, which is not contained in the Reidemeister move $R$, analogously we can obtain 
\begin{equation*}
\widehat{W}_{0}(L_{+}^{c}) = \widehat{W}_{0}(R(L_{+}^{c})).
\end{equation*}
Now we assume that every bad crossing is contained in $R$. Suppose that a bad pure crossing $c$ is contained in the first Reidemeister move $R$. Without loss of generality, we may assume that $c$ is a positive crossing. If there is another bad crossing $c'$, then analogously we obtain that $\widehat{W}_{b}(R(L)) = \widehat{W}_{b}(L)$, since $c'$ must not be contained in the first Reidemeister move. It is sufficient to show that if $L$ has the only one bad crossing $c$ with respect to the given base points $b$, then $\widehat{W}_{b}(R(L)) = \widehat{W}_{b}(L)$. Suppose that $L$ has the only one bad crossing $c$ with respect to $b$. Since $c$ is the only one bad crossing with respect to $b$ of $L_{+}^{c}$, $L_{-}^{c}$ has no bad crossings and hence $\widehat{W}_{b}(L_{-}^{c}) = a_{r} = \widehat{W}_{b}(R(L_{-}^{c}))$, where $r$ is the number of components of $L$. Since $L_{0}^{c} = R(L_{-}^{c}) \sqcup T_{1}$, where $T_{1}$ is the trivial circle, $L_{0}^{c}$ has no bad crossings with respect to $b$ and $\widehat{W}_{b}(L_{0}^{c}) = a_{r+1}$, see Fig.~\ref{4inv_RM1}. 
\begin{figure}[h!]
\begin{center}
 \includegraphics[width = 8cm]{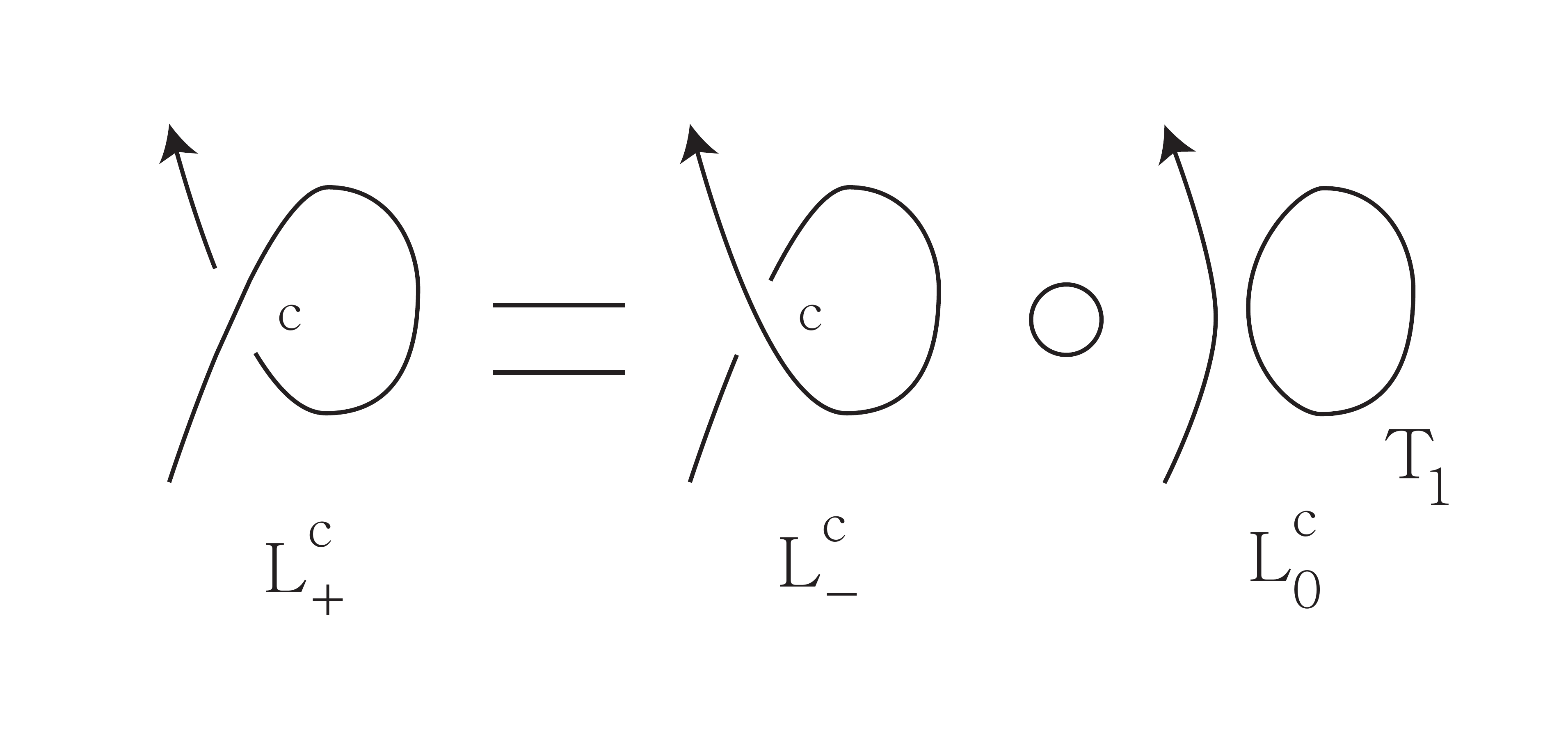}

\end{center}

 \caption{The skein relation on bad pure crossing in RM1}\label{4inv_RM1}
\end{figure}
By applying the relation~(\ref{selfConwayrel}) we obtain that $\widehat{W}_{b}(L_{+}^{c}) = \widehat{W}_{b}(L_{-}^{c}) \circ \widehat{W}_{b}(L_{0}^{c}) = a_{r} \circ a_{r+1}$. Since $R(L)$ has no bad crossings, $\widehat{W}_{b}(R(L)) = a_{r} = a_{r} \circ a_{r+1} = \widehat{W}_{b}(L_{+}^{c})=\widehat{W}_{b}(L)$.

If a bad pure crossing is contained in the second Reidemeister move, then another crossing is also bad pure crossing with the opposite crossing sign. If $c$ and $c'$ are bad pure crossings contained in the second Reidemeister move, we obtain that
\begin{eqnarray*}
\widehat{W}_{0}(L^{cc'}_{+-}) &=& \widehat{W}_{0}(L^{cc'}_{--}) \circ \widehat{W}_{0}(L^{cc'}_{0-})\\
&=&  (\widehat{W}_{0}(L^{cc'}_{-+}) / \widehat{W}_{0}(L^{cc'}_{-0})) \circ \widehat{W}_{0}(L^{cc'}_{0-}) \\
&=&  (\widehat{W}_{0}(L^{cc'}_{-+}) / \widehat{W}_{0}(L^{cc'}_{0-})) \circ \widehat{W}_{0}(L^{cc'}_{0-}) \\
&=& \widehat{W}_{0}(L^{cc'}_{-+}).
\end{eqnarray*}
The third equation holds because the diagrams $L^{cc'}_{-0}$ and $L^{cc'}_{0-}$ have the same diagram. Since $b(L^{cc'}_{-+}) = b(L^{cc'}_{+-})-2$ and $ \widehat{W}_{0}(R(L^{cc'}_{-+})) =  \widehat{W}_{0}(R(L^{cc'}_{+-}))$, we get $$\widehat{W}_{0}(L^{cc'}_{-+}) = \widehat{W}_{0}(R(L^{cc'}_{-+})) = \widehat{W}_{0}(R(L^{cc'}_{+-})) = \widehat{W}_{0}(L^{cc'}_{+-}),$$ see Fig.~\ref{5pure_inv_RM2}.
\begin{figure}[h!]
\begin{center}
 \includegraphics[width = 8cm]{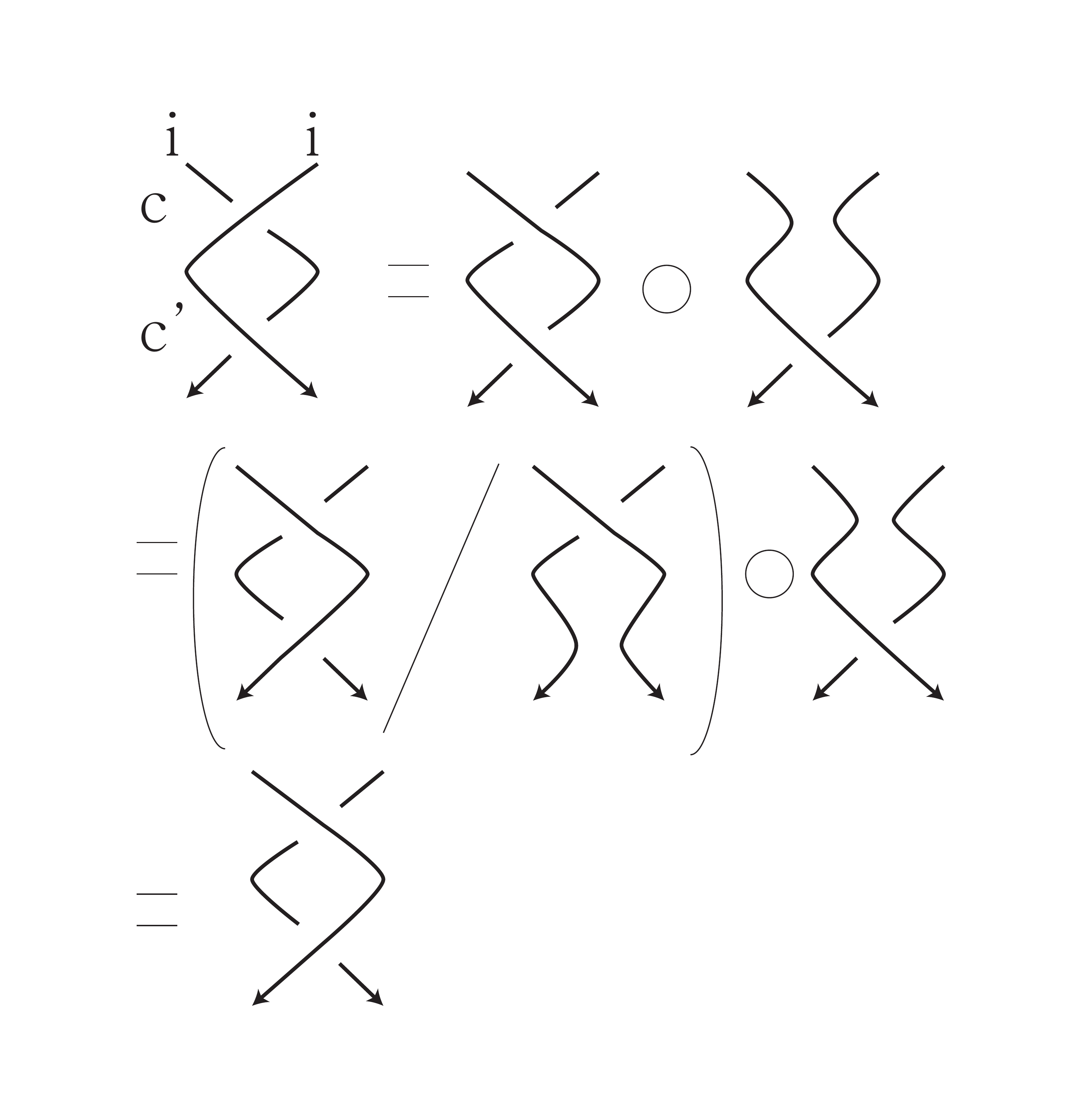}

\end{center}

 \caption{The skein relation on bad pure crossings in RM2}\label{5pure_inv_RM2}
\end{figure}
If $c$ and $c'$ are bad mixed crossings, then analogously we obtain that $$\widehat{W}_{0}(L^{cc'}_{+-}) = \widehat{W}_{0}(R(L^{cc'}_{+-})).$$

Suppose that $c$ is a bad crossing in the third Reidemeister move $R$. Note that if the crossing between the top arc and the bottom arc is bad, then there is another bad crossing contained in $R$. That is, it suffices to show that for a crossing $c$ between the top and the middle arcs(or the middle and the bottom arcs) the equation $\widehat{W}_{0}(L^{c}_{+}) = \widehat{W}_{0}(R(L^{c}_{+}))$ holds. Assume that $c$ is the crossing between the middle and the bottom arcs in $R$.
If $c$ is a positive pure crossing, then by the skein relation, we obtain
\begin{equation*}
\widehat{W}_{0}(L^{c}_{+}) = \widehat{W}_{0}(L^{c}_{-}) \circ \widehat{W}_{0}(L^{c}_{0})
\end{equation*}
and
\begin{equation*}
\widehat{W}_{0}(R(L)^{c}_{+}) = \widehat{W}_{0}(R(L)^{c}_{-}) \circ \widehat{W}_{0}(R(L)^{c}_{0}).
\end{equation*}
Since $L^{c}_{-}$ and $R(L)^{c}_{-}$ have $m$ bad crossings, by the assumption for the second induction, $\widehat{W}_{0}(L^{c}_{-}) = \widehat{W}_{0}(R(L)^{c}_{-})$. Since $\widehat{W}_{0}(L^{c}_{0})$ and $\widehat{W}_{0}(R(L)^{c}_{0})$ have the same diagram as described in Fig.~\ref{6pure_inv_RM3-1}, we obtain that $\widehat{W}_{0}(L^{c}_{+}) = \widehat{W}_{0}(R(L^{c}_{+}))$. 
\begin{figure}[h!]
\begin{center}
 \includegraphics[width = 10cm]{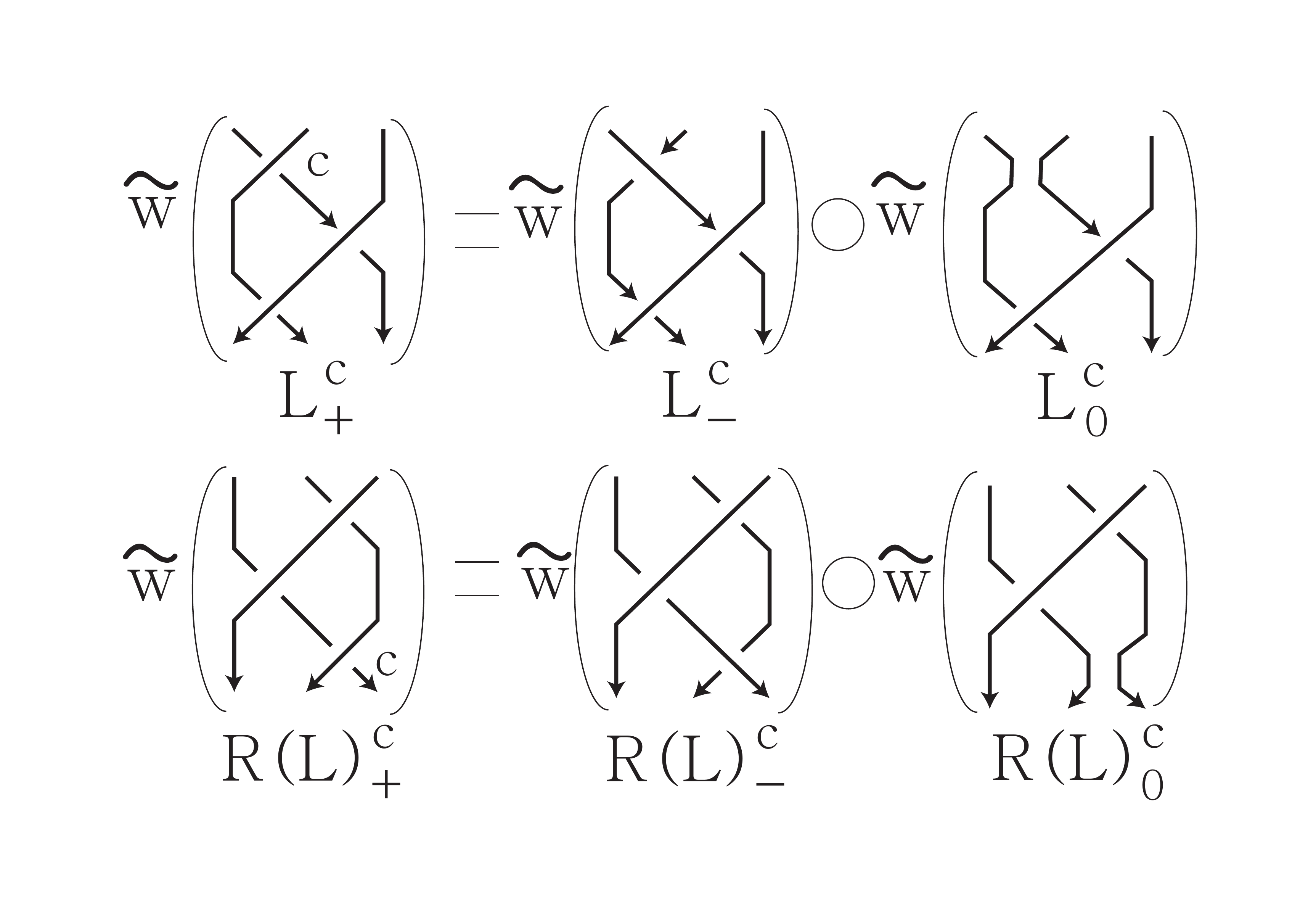}

\end{center}

 \caption{The skein relation on a positive bad pure crossing in RM3}\label{6pure_inv_RM3-1}
\end{figure}
If $c$ is a negative pure crossing, we obtain 
\begin{equation*}
\widehat{W}_{0}(L^{c}_{-}) = \widehat{W}_{0}(L^{c}_{+}) / \widehat{W}_{0}(L^{c}_{0})
\end{equation*}
and
\begin{equation*}
\widehat{W}_{0}(R(L)^{c}_{-}) = \widehat{W}_{0}(R(L)^{c}_{+}) / \widehat{W}_{0}(R(L)^{c}_{0}).
\end{equation*}
Since $L^{c}_{+}$ and $R(L)^{c}_{+}$ has $m$ bad crossings, by the assumption for the second induction, the equation $\widehat{W}_{0}(L^{c}_{+}) = \widehat{W}_{0}(R(L)^{c}_{+})$ holds. Since we already proved that $\widehat{W}_{0}$ is invariant under the second Reidemeister move, we obtain that $\widehat{W}_{0}(L^{c}_{0}) = \widehat{W}_{0}(R(L)^{c}_{0})$, see Fig.~\ref{7pure_inv_RM3-2}. Therefore $\widehat{W}_{0}(L^{c}_{-}) = \widehat{W}_{0}(R(L)^{c}_{-})$. 
\begin{figure}[h!]
\begin{center}
 \includegraphics[width = 10cm]{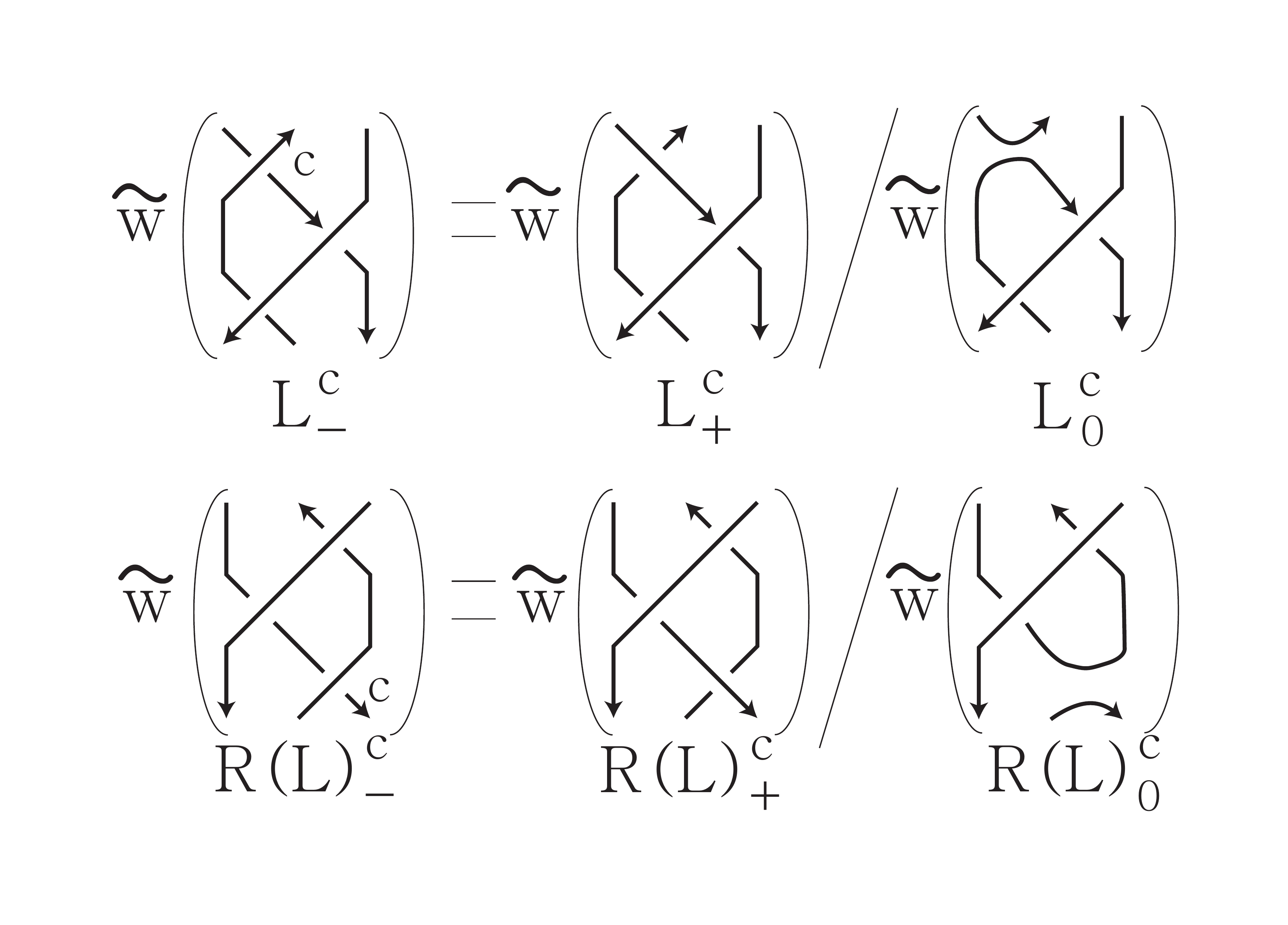}

\end{center}

 \caption{The skein relation on a negative bad pure crossing in RM3}\label{7pure_inv_RM3-2}
\end{figure}
If $c$ is a mixed crossing, then analogously we obtain that $\widehat{W}_{0}(L) = \widehat{W}_{0}(R(L))$ and the statement is true for diagrams in $\mathcal{L}_{k+1}$.

\textbf{(e) The value of $\widehat{W}_{0}(L)$ does not depend on the order of components.}

For an ordered oriented link diagram $L = L_{1} \cup \cdots \cup L_{r}$ of $r$ components, let $b = \{b_{1},\cdots, b_{i},b_{i+1}, \cdots, b_{r}\}$ be ordered base points. If $L$ is a knot diagram, then it is clear that the value of $\widehat{W}_{0}(L)$ does not depend on the order of components. Now we assume that $r>1$. Suppose that $b' = \{b_{1},\cdots, b_{i+1},b_{i}, \cdots, b_{r}\}$ is ordered base points on $L$ obtained from $b$ by changing the order of $L_{i}$ and $L_{i+1}$. Let us consider a diagram $L$ which has a trivial component separated from other components. By relocating the trivial component we obtain a new diagram $L'$. If the relocated component is also separated from other components for the diagram $L'$, then we denote the diagram $L'$ by $RLT(L)$, for example, see Fig.~\ref{8RLToper}. We call this process for link diagrams which have trivial components {\it the $RLT$ operation.}
\begin{figure}[h!]
\begin{center}
 \includegraphics[width = 10cm]{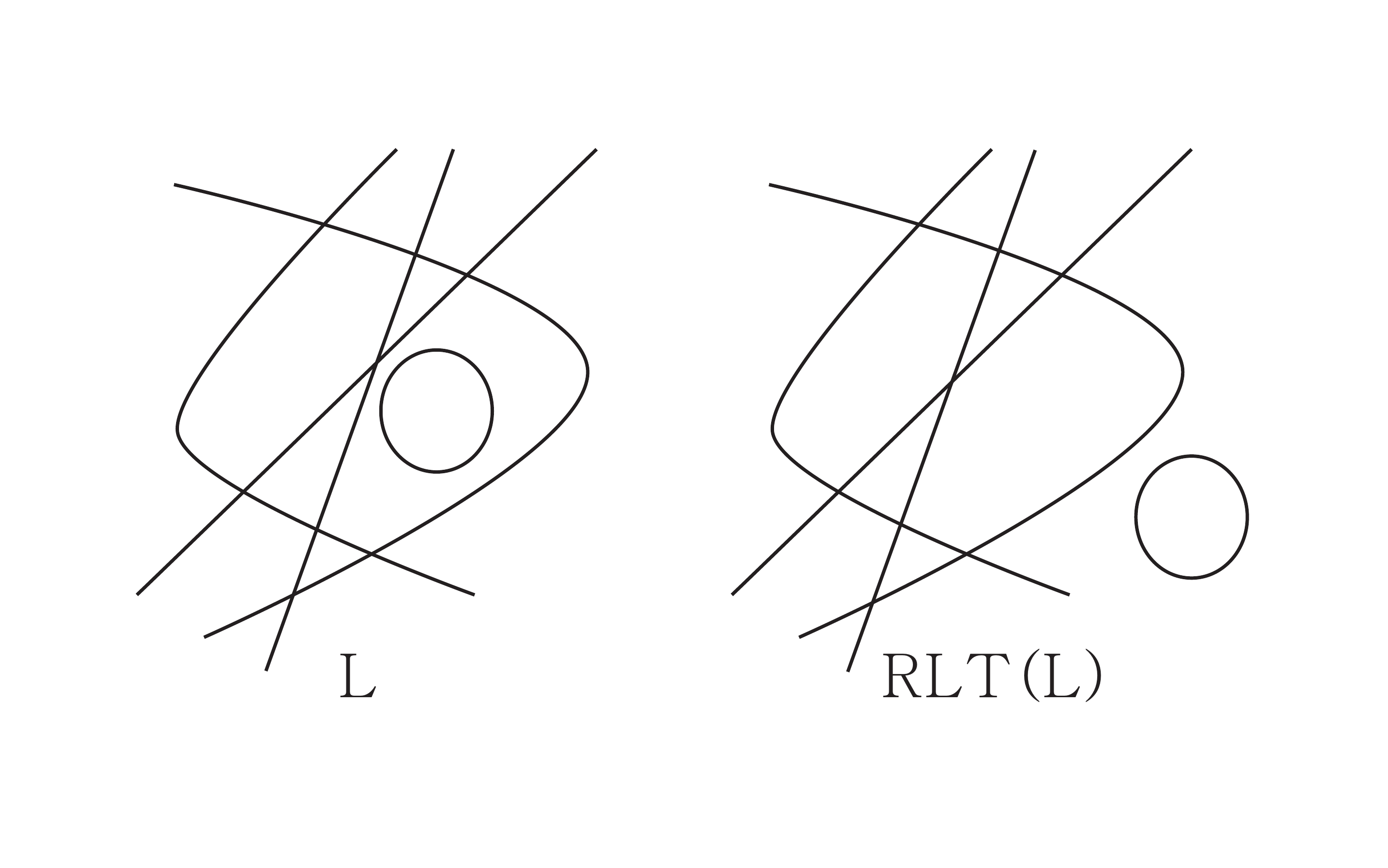}

\end{center}

 \caption{The RLT operation}\label{8RLToper}
\end{figure}

\begin{lem}\label{go_trivial}
Let $L$ be an oriented link diagram. Then $L$ can be changed to a trivial link diagram without crossings by a sequence of crossing changes, Reidemeister moves, which do not increase the number of crossings, and the $RLT$ operations. 
\end{lem}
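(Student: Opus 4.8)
The plan is to first bring $L$ into descending position by crossing changes and then to strip off crossings one at a time by an innermost-loop argument, the $RLT$ operation being used precisely to push aside trivial separated circles that would otherwise block a first Reidemeister move. I would argue by induction on the number $n$ of crossings. Order the components of $L$, fix base points $b=\{b_1,\dots,b_r\}$, and perform a crossing change at every bad crossing; the resulting diagram $D$ is descending. In particular $L_1$ lies over $L_2\cup\cdots\cup L_r$ and, read by itself, is a descending knot diagram. If $n=0$ we are done, since a crossingless diagram is a crossingless diagram of a trivial link; so assume $n\ge 1$.

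\textbf{Locating a loop.} Consider $L_1$. If $L_1$ has a self-crossing, traverse it from $b_1$ and let $c$ be the first crossing of $L_1$ whose second visit is reached; the subarc $\beta\subset L_1$ run out between the two passages through $c$ is embedded ($\beta$ is embedded since a self-crossing of $\beta$ would have its second visit before that of $c$) and passes over everything it meets (every crossing met along $\beta$ is met there for the first time, hence is an overpass). Closed up through $c$, the arc $\beta$ bounds a disk $\Delta$ whose interior meets $D$ only in strands running under $\beta$, possibly together with whole components lying inside $\Delta$. If $L_1$ has no self-crossing it is an embedded circle; if it meets another component, let $\Delta$ be the disk it bounds, again met by $D$ only from below along $\partial\Delta\subset L_1$, possibly together with components inside. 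If $L_1$ has no crossings at all it is a trivial separated circle: then $n=0$ when $r=1$, while for $r\ge 2$ we carry $L_1$ off by an $RLT$ operation and conclude by a secondary induction on the number of components applied to $L_2\cup\cdots\cup L_r$.

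\textbf{Reducing a crossing.} It remains to empty $\Delta$ and delete one crossing. A component of $D$ lying inside $\Delta$ is itself descending, hence an unknot diagram; if it still carries a crossing, the same loop analysis applied to it (after clearing, recursively, any still deeper components) lowers $n$ by a first or second Reidemeister move, since its loop is crossed from above only by earlier-traversed components, whose strands can be pulled out; and once such a component is a crossingless circle it bounds a disk meeting nothing else, so an $RLT$ operation carries it out of $\Delta$. When $\Delta$ meets $D$ only in arcs crossing $\partial\Delta$, an innermost such arc, after finitely many third Reidemeister moves, forms a bigon with a subarc of $\beta$, and a second Reidemeister move deletes its two crossings; and when $\Delta$ is empty a first Reidemeister move deletes $c$. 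In every case $n$ drops, while the third Reidemeister moves and $RLT$ operations used along the way keep $n$ fixed; so the induction closes and after finitely many steps the diagram is crossingless, hence a trivial link diagram.

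\textbf{The main obstacle.} The one genuinely delicate point is the evacuation of $\Delta$, which may involve components nested inside loops nested inside $\Delta$. The way to keep the recursion well founded is to observe that every move except an $RLT$ operation strictly lowers $n$: as soon as a nested component carries a crossing one lowers $n$ and restarts the induction, and the $RLT$ operation is invoked only on an honest crossingless separated circle. Verifying that such a circle really is separated in the sense demanded by the $RLT$ operation (it lies in a subdisk of $\Delta$ disjoint from the rest of $D$), and that an innermost pull-out bigon is always reachable using third Reidemeister moves alone, are the careful but routine checks this plan leaves open.
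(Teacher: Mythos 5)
Your proposal is correct in outline and runs on the same basic mechanism as the paper's proof --- induction on the crossing number, an innermost disk that is emptied by third Reidemeister moves and then collapsed by a decreasing first or second Reidemeister move, with the $RLT$ operation reserved for crossingless circles trapped inside the disk --- but it is packaged differently. The paper never passes to a descending diagram: it takes an innermost $2$-gon or curl of the given diagram (one containing no smaller $2$-gon or curl), uses crossing changes freely together with third Reidemeister moves to empty it, removes it by a decreasing second or first Reidemeister move, and relocates by $RLT$ any trivial circle found inside. You instead make the diagram descending first and locate the curl as the first-return loop $\beta$ of the top component; what this buys you is that $\beta$ is automatically an overpass for everything it meets, so the moves pushing strands out across $\beta$ are always of the legal ``slide the top strand'' type and need no extra crossing changes, at the price of the bookkeeping for nested components in your last two paragraphs. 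The step you flag as delicate --- that the disk can actually be emptied by finitely many triangle moves and crossing-decreasing bigon moves, never increasing the crossing number --- is exactly the step the paper also asserts without argument, so your sketch is at the same level of completeness as the published one.

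Two small corrections. First, ``if $L_1$ has no crossings at all it is a trivial separated circle'' is not quite right: a crossingless component may have other components nested on both of its sides, in which case it is not separated in the sense required by the $RLT$ operation; either clear one side first, or simply ignore such a component, since the lemma only asks for a crossingless final diagram. Second, when you run the loop analysis on a component nested inside $\Delta$, its loop is crossed from above by earlier components, so pulling those strands out is no longer a top-strand slide; there you genuinely need the crossing changes that the lemma permits (and that the paper's proof invokes), and this should be said explicitly.
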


We will prove this lemma later. Now we come back to the induction on the number of crossings of diagrams to show that the statement (e) is true for diagrams in $\mathcal{L}_{k+1}$ with base points $b$. Notice that the equation $\widehat{W}_{b}(L) = \widehat{W}_{b'}(L)$ holds if and only if the equation $\widehat{W}_{b}(L') = \widehat{W}_{b'}(L')$ holds, where $L'$ is a link diagram obtained from $L$ by crossing changes. Since $\widehat{W}_{b}$ is invariant under Reidemeister moves in $\mathcal{L}_{k+1}$, which do not increase the number of crossings, $\widehat{W}_{b}(L) = \widehat{W}_{b'}(L)$ holds if and only if $\widehat{W}_{b}(R(L)) = \widehat{W}_{b'}(R(L))$ holds. We remark that if one of components of $L$ is a trivial knot diagram $L_{k}$ and it is separated from other components, then the value of $\widehat{W}_{b}$ does not depend on the position of $L_{k}$, that is, $\widehat{W}_{b}(L)=\widehat{W}_{b}(RLT(L))$. By the previous remark it is easy to see that
\begin{center}
$\widehat{W}_{b}(L) = \widehat{W}_{b'}(L)$ if and only if $\widehat{W}_{b}(RLT(L)) = \widehat{W}_{b'}(RLT(L))$,
\end{center}
for a diagram $L$ which has a trivial component separated from other components. Since the equation $\widehat{W}_{b}(T_{n}) = \widehat{W}_{b'}(T_{n})$ holds for a trivial link diagram $T_{n}$ of $n$ components without crossings, by Lemma~\ref{go_trivial} the proof is completed.
\end{proof}
\begin{proof}[Proof of Lemma~\ref{go_trivial}]
Let $L$ be a link diagram. We call a 2-gon(a curl) of $L$ {\it an innermost} if there are no 2-gons and curls inside the 2-gon(a curl). Suppose that there is an innermost 2-gon. If there are no trivial circle, then $L$ can be deformed by crossing changes and the third Reidemeister moves to a diagram $L'$, in which there are no crossings inside the 2-gon. By a decreasing second Reidemeister move, we can remove the innermost 2-gon. If there is a trivial circle inside of the innermost 2-gon, then we relocate the trivial component outside of 2-gon to be separated from other components and then analogously we can remove the innermost 2-gon. Similarly, we can remove curls and by repeating this process and we obtain a trivial link diagram without crossings.
\end{proof}

\begin{exa}\label{exa_gen_Conway}
Let $\widehat{\mathcal{A}} = \mathbb{Z} [p^{\pm 1 }, q^{\pm 1}, r]$. Define the binary operations $\circ,*,/$ and $//$ by
\begin{eqnarray*}
a \circ b = pa + qb,& a / b = p^{-1} a - p^{-1}qb,\\
a * b = pa + rb,& a // b = p^{-1} a - p^{-1}rb.
\end{eqnarray*}
Denote $a_{n} = (\frac{1-p}{q})^{n-1}$ for each $n$. Then $(\widehat{\mathcal{A}}, \circ,/,*, //,\{a_{n}\}_{n=1}^{\infty})$ is a generalized Conway algebra of type $1$.
\end{exa}

\begin{exa}\label{exa_gen_Homflypt}
Let $\widehat{\mathcal{A}} = Z[v^{\pm 1}, z, w^{\pm 1}]$ be an algebra. Define binary operations $\circ,/,*$ and $//$ by
\begin{eqnarray*}
a\circ b = v^{2}a+vwb,& a/b = v^{-2}a-v^{-1}wb,\\ 
a* b = v^{2}a+vzb,& a//b = v^{-2}a-v^{-1}zb.
\end{eqnarray*}
Put $a_{n} = (\frac{v^{-1}-v}{w})^{n-1}.$ Then $(\widehat{\mathcal{A}},\circ,/,*,//,\{a_{n}\}_{n=1}^{\infty})$ is a generalized Conway algebra of type 1. In fact, this is obtained from the generalized Conway algebra of type 1 in example~\ref{exa_gen_Conway} by substituting $p=v^{2}$, $q = vw$ and $r = vz$. Moreover the Conway type invariant valued in the Conway algebra $(\widehat{\mathcal{A}},\circ,/,\{a_{n}\}_{n=1}^{\infty})$ is the Homflypt polynomial.
\end{exa}

On the other hands, usually polynomial invariants are defined with skein relations in the form of linear functions $L_{+} = f(L_{-},L_{0}) = pL_{-} + qL_{0}$. But there exists a generalized Conway algebra of type 1 with binary operation in the form of non-linear functions, see example~\ref{exa_non_linear_gen_Conway}. 
\begin{exa}\label{exa_non_linear_gen_Conway}
Let us fix a natural number $k$. Let $\widehat{\mathcal{A}}$ be the commutative ring with identity containing $\mathbb{Z}[p^{\pm1},q^{\pm1},r^{\pm1}]$ such that $\sqrt[k]{f} \in \widehat{\mathcal{A}}$ for each $f \in \widehat{\mathcal{A}}$, where $\sqrt[k]{f}$ is the formal $k-$th root, that is, $(\sqrt[k]{f})^{k} = \sqrt[k]{f^{k}} =f$ and $(\sqrt[k]{f})(\sqrt[k]{g}) = (\sqrt[k]{fg})$ for $f,g \in \widehat{\mathcal{A}}$. Define binary operations $\circ,/,*$ and $//$ by
\begin{eqnarray*}
a\circ b = \sqrt[k]{pa^{k}+qb^{k}}, & a/b =  \sqrt[k]{p^{-1}a^{k}-p^{-1}qb^{k}} \\
a * b = \sqrt[k]{pa^{k}+rb^{k}}, & a//b =  \sqrt[k]{p^{-1}a^{k}-p^{-1}rb^{k}},
\end{eqnarray*}
for $a,b \in \widehat{\mathcal{A}}$. Let $\{a_{n}\}$ be the sequence defined by the following recurrence relation
$$ a_{1} = 1, a_{n+1}^{k} = \frac{(1-p)}{q}a_{n}^{k}.$$
We can show that $(\widehat{\mathcal{A}}, \circ,/,*, //,\{a_{n}\}_{n=1}^{\infty})$ is a generalized Conway algebra of type $1$, see Appendix.
\end{exa}

That is, the Conway type invariant $\widehat{W}$ of type 1 valued in the generalized Conway algebra of type 1 in example~\ref{exa_non_linear_gen_Conway} satisfies the non linear skein relations
$$\widehat{W}(L^{c}_{+})= \sqrt[k]{p(\widehat{W}(L^{c}_{-}))^{k}+q(\widehat{W}(L^{c}_{0}))^{k}},$$
for a pure crossing of a diagram $L$, and
$$\widehat{W}(L^{c}_{+})= \sqrt[k]{p(\widehat{W}(L^{c}_{-}))^{k}+r(\widehat{W}(L^{c}_{0}))^{k}},$$
for a mixed crossing of a diagram $L$.
Since the 4-variable invariant of Kauffman and Lambropoulou satisfies the skein relation in the linear form, a different invariant of oriented link with the invariant of Kauffman and Lambropoulou must be obtained, if $k \not=1$. It is expected that the invariant satisfying non linear skein relation would give us more information of oriented links.

\section*{Acknowledgement}
I would like to express gratitude to my supervisor Professor Vassily Olegovich Manturov. 
Also I would like to thank Igor Mikhailovich Nikonov for helpful discussion and kind advices.

\section*{Appendix}
Let us fix a natural number $k$. Let $\widehat{\mathcal{A}}$ be the commutative ring with identity containing $\mathbb{Z}[p^{\pm1},q^{\pm1},r^{\pm1}]$ such that $\sqrt[k]{f} \in \widehat{\mathcal{A}}$ for each $f \in \widehat{\mathcal{A}}$, where $\sqrt[k]{f}$ is the formal $k-$th root, that is, $(\sqrt[k]{f})^{k} = \sqrt[k]{f^{k}} =f$ and $(\sqrt[k]{f})(\sqrt[k]{g}) = (\sqrt[k]{fg})$ for $f,g \in \widehat{\mathcal{A}}$. Define binary operations $\circ,/,*$ and $//$ by
\begin{eqnarray*}
a\circ b = \sqrt[k]{pa^{k}+qb^{k}}, & a/b =  \sqrt[k]{p^{-1}a^{k}-p^{-1}qb^{k}} \\
a * b = \sqrt[k]{pa^{k}+rb^{k}}, & a//b =  \sqrt[k]{p^{-1}a^{k}-p^{-1}rb^{k}},
\end{eqnarray*}
for $a,b \in \widehat{\mathcal{A}}$. Let $\{a_{n}\}$ be the sequence defined by the following recurrence relation
$$ a_{1} = 1, a_{n+1}^{k} = \frac{(1-p)}{q}a_{n}^{k}.$$
We will show that $(\widehat{\mathcal{A}}, \circ,/,*, //,\{a_{n}\}_{n=1}^{\infty})$ satisfies the relations (A) -- (G) in definition \ref{def_genConaltype1}. For $a,b \in \widehat{\mathcal{A}}$ we obtain that
\begin{eqnarray*}
(a \circ b) / b &=& \sqrt[k]{ p^{-1}( \sqrt[k]{ pa^{k}+qb^{k} })^{k} - p^{-1}qb^{k} } \\
 &= & \sqrt[k]{p^{-1}(pa^{k}+qb^{k}) - p^{-1}qb^{k} } \\
 &=& a.
\end{eqnarray*}
Analogously we can show that $(a/b)\circ b = a = (a* b) // b  = (a // b) * b$ and hence the binary operations satisfy the condition (A). From the condition (B), we obtain that 
\begin{alignat*}{3}
a_{n} &= a_{n} \circ a_{n+1} && \Leftrightarrow \\
a_{n} &= \sqrt[k]{pa^{k}_{n}+qa^{k}_{n+1}} && \Leftrightarrow \\
a_{n}^{k} &= pa^{k}_{n}+qa^{k}_{n+1} && \Leftrightarrow \\
(1-p)a_{n}^{k}&= qa_{n+1}^{k}.&
\end{alignat*}
Since the sequence $\{a_{n}\}_{n=1}^{\infty}$ satisfies the relation $a_{n+1}^{k} = \frac{(1-p)}{q}a_{n}^{k}$, the binary operations and the sequence $\{a_{n}\}_{n=1}^{\infty}$ satisfy the condition (B). For $a,b,c,d \in \widehat{\mathcal{A}}$, we obtain that  
\begin{eqnarray*}
(a \circ b) \circ (c \circ d) &=& \sqrt[k]{ p( \sqrt[k]{ pa^{k}+qb^{k} })^{k} + q( \sqrt[k]{ pc^{k}+qd^{k} })^{k}  } \\
 &= &\sqrt[k]{ p^{2}a^{k}+pqb^{k} + qpc^{k}+q^{2}d^{k}  },
\end{eqnarray*}
and
\begin{eqnarray*}
(a \circ c) \circ (b \circ d) &=& \sqrt[k]{ p( \sqrt[k]{ pa^{k}+qc^{k} })^{k} + q( \sqrt[k]{ pb^{k}+qd^{k} })^{k}  } \\
 &= &\sqrt[k]{ p^{2}a^{k}+pqc^{k} + qpb^{k}+q^{2}d^{k}  }.
\end{eqnarray*}
Therefore we can show that the condition (C) $(a \circ b) \circ (c \circ d) = (a \circ c) \circ (b \circ d)$ holds. Similarly, we can show that the condition (D) $(a * b) * (c * d) = (a * c) * (b * d)$ holds. For the condition (E), let $a,b,c,d \in \widehat{\mathcal{A}}$. We obtain that 
\begin{eqnarray*}
(a \circ b) \circ (c * d) &=& \sqrt[k]{ p( \sqrt[k]{ pa^{k}+qb^{k} })^{k} + q( \sqrt[k]{ pc^{k}+rd^{k} })^{k}  } \\
 &= &\sqrt[k]{ p^{2}a^{k}+pqb^{k} + qpc^{k}+qrd^{k}  },
\end{eqnarray*}
and 
\begin{eqnarray*}
(a \circ c) \circ (b * d) &=& \sqrt[k]{ p( \sqrt[k]{ pa^{k}+qc^{k} })^{k} + q( \sqrt[k]{ pb^{k}+rd^{k} })^{k}  } \\
 &= &\sqrt[k]{ p^{2}a^{k}+pqc^{k} + qpb^{k}+qrd^{k} }.
\end{eqnarray*}
We obtain that $(a \circ b) \circ (c * d) = (a \circ b) \circ (c * d)$ and hence the condition (E) holds. Analogously the condition (F) $(a * b) * (c \circ d) = (a * b) * (c \circ d)$ holds. Finally we show that the binary operations satisfies the condition (G). Let $a,b,c,d \in \widehat{\mathcal{A}}$. Analogously we obtain that 
\begin{eqnarray*}
(a \circ b) * (c \circ d) &=& \sqrt[k]{ p( \sqrt[k]{ pa^{k}+qb^{k} })^{k} + r( \sqrt[k]{ pc^{k}+qd^{k} })^{k}  } \\
 &= &\sqrt[k]{ p^{2}a^{k}+pqb^{k} + rpc^{k}+rqd^{k}  },
\end{eqnarray*}
and
\begin{eqnarray*}
(a * c) \circ (b * d) &=& \sqrt[k]{ p( \sqrt[k]{ pa^{k}+rc^{k} })^{k} + q( \sqrt[k]{ pb^{k}+rd^{k} })^{k}  } \\
 &= &\sqrt[k]{ p^{2}a^{k}+prc^{k} +qpb^{k}+qrd^{k}  }.
\end{eqnarray*}
Therefore we can see that the condition (G) $(a \circ b) * (c \circ d) = (a * c) \circ (b * d)$ holds.

\end{document}